
\documentclass [reqno]{amsart}

\usepackage{amsmath,amsfonts,amssymb,amscd,verbatim,delarray,fullpage}

\usepackage{stmaryrd}
\usepackage{amsthm}
\usepackage{url}
\usepackage[polutonikogreek,english]{babel}

\usepackage{pgf}
\usepackage{latexsym,amsmath,amssymb}
\usepackage[parfill]{parskip} 


\pagestyle{plain}

\DeclareMathOperator{\frob}{Frob}

\DeclareMathOperator{\tr}{tr}


\chardef\bslash=`\\ 





\hfuzz1pc 



\begin{document}


\newtheorem{Theorem}{Theorem}[section]

\newtheorem{cor}[Theorem]{Corollary}

\newtheorem{Conjecture}[Theorem]{Conjecture}

\newtheorem{Lemma}[Theorem]{Lemma}
\newtheorem{lemma}[Theorem]{Lemma}
\newtheorem{property}[Theorem]{Property}
\newtheorem{proposition}[Theorem]{Proposition}
\newtheorem{ax}[Theorem]{Axiom}
\newtheorem{claim}[Theorem]{Claim}

\newtheorem{nTheorem}{Surjectivity Theorem}

\theoremstyle{definition}
\newtheorem{Definition}[Theorem]{Definition}
\newtheorem{problem}[Theorem]{Problem}
\newtheorem{question}[Theorem]{Question}
\newtheorem{Example}[Theorem]{Example}

\newtheorem{remark}[Theorem]{Remark}
\newtheorem{diagram}{Diagram}
\newtheorem{Remark}[Theorem]{Remark}
\newcommand{\diagref}[1]{diagram~\ref{#1}}
\newcommand{\thmref}[1]{Theorem~\ref{#1}}
\newcommand{\secref}[1]{Section~\ref{#1}}
\newcommand{\subsecref}[1]{Subsection~\ref{#1}}
\newcommand{\lemref}[1]{Lemma~\ref{#1}}
\newcommand{\corref}[1]{Corollary~\ref{#1}}
\newcommand{\exampref}[1]{Example~\ref{#1}}
\newcommand{\remarkref}[1]{Remark~\ref{#1}}
\newcommand{\corlref}[1]{Corollary~\ref{#1}}
\newcommand{\claimref}[1]{Claim~\ref{#1}}
\newcommand{\defnref}[1]{Definition~\ref{#1}}
\newcommand{\propref}[1]{Proposition~\ref{#1}}
\newcommand{\prref}[1]{Property~\ref{#1}}
\newcommand{\itemref}[1]{(\ref{#1})}


\newcommand{\CE}{\mathcal{E}}
\newcommand{\CG}{\mathcal{G}}\newcommand{\CV}{\mathcal{V}}
\newcommand{\CL}{\mathcal{L}}
\newcommand{\CM}{\mathcal{M}}
\newcommand{\A}{\mathcal{A}}
\newcommand{\CO}{\mathcal{O}}
\newcommand{\B}{\mathcal{B}}
\newcommand{\CS}{\mathcal{S}}
\newcommand{\CX}{\mathcal{X}}
\newcommand{\CY}{\mathcal{Y}}
\newcommand{\CT}{\mathcal{T}}
\newcommand{\CW}{\mathcal{W}}
\newcommand{\CJ}{\mathcal{J}}

\newcommand{\st}{\sigma}
\renewcommand{\k}{\varkappa}
\newcommand{\Frac}{\mbox{Frac}}
\newcommand{\XC}{\mathcal{X}}
\newcommand{\wt}{\widetilde}
\newcommand{\wh}{\widehat}
\newcommand{\mk}{\medskip}
\renewcommand{\sectionmark}[1]{}
\renewcommand{\Im}{\operatorname{Im}}
\renewcommand{\Re}{\operatorname{Re}}
\newcommand{\la}{\langle}
\newcommand{\ra}{\rangle}
\newcommand{\LND}{\mbox{LND}}
\newcommand{\Pic}{\mbox{Pic}}
\newcommand{\lnd}{\mbox{lnd}}
\newcommand{\GLND}{\mbox{GLND}}\newcommand{\glnd}{\mbox{glnd}}
\newcommand{\Der}{\mbox{DER}}\newcommand{\DER}{\mbox{DER}}
\renewcommand{\th}{\theta}
\newcommand{\ve}{\varepsilon}
\newcommand{\1}{^{-1}}
\newcommand{\iy}{\infty}
\newcommand{\iintl}{\iint\limits}
\newcommand{\capl}{\operatornamewithlimits{\bigcap}\limits}
\newcommand{\cupl}{\operatornamewithlimits{\bigcup}\limits}
\newcommand{\suml}{\sum\limits}
\newcommand{\ord}{\operatorname{ord}}
\newcommand{\gal}{\operatorname{Gal}}
\newcommand{\bk}{\bigskip}
\newcommand{\fc}{\frac}
\newcommand{\g}{\gamma}
\newcommand{\be}{\beta}
\newcommand{\dl}{\delta}
\newcommand{\Dl}{\Delta}
\newcommand{\lm}{\lambda}
\newcommand{\Lm}{\Lambda}
\newcommand{\om}{\omega}
\newcommand{\ov}{\overline}
\newcommand{\vp}{\varphi}
\newcommand{\kap}{\varkappa}

\newcommand{\Vp}{\Phi}
\newcommand{\Varphi}{\Phi}
\newcommand{\BC}{\mathbb{C}}
\newcommand{\C}{\mathbb{C}}\newcommand{\BP}{\mathbb{P}}
\newcommand{\BQ}{\mathbb {Q}}
\newcommand{\BM}{\mathbb{M}}
\newcommand{\BR}{\mathbb{R}}\newcommand{\BN}{\mathbb{N}}
\newcommand{\BZ}{\mathbb{Z}}\newcommand{\BF}{\mathbb{F}}
\newcommand{\BA}{\mathbb {A}}
\renewcommand{\Im}{\operatorname{Im}}
\newcommand{\idd}{\operatorname{id}}
\newcommand{\ep}{\epsilon}
\newcommand{\tp}{\tilde\partial}
\newcommand{\doe}{\overset{\text{def}}{=}}
\newcommand{\supp} {\operatorname{supp}}
\newcommand{\loc} {\operatorname{loc}}
\newcommand{\de}{\partial}
\newcommand{\z}{\zeta}
\renewcommand{\a}{\alpha}
\newcommand{\G}{\Gamma}
\newcommand{\der}{\mbox{DER}}

\newcommand{\Spec}{\operatorname{Spec}}
\newcommand{\Sym}{\operatorname{Sym}}
\newcommand{\Aut}{\operatorname{Aut}}

\newcommand{\Idd}{\operatorname{Id}}

\newcommand{\tG}{\widetilde G}

\newcommand{\FX}{\mathfrac {X}}
\newcommand{\FV}{\mathfrac {V}}
\newcommand{\SX}{\mathcal {X}}
\newcommand{\SV}{\mathcal {V}}
\newcommand{\SO}{\mathcal {O}}
\newcommand{\SD}{\mathcal {D}}
\newcommand{\Sr}{\rho}
\newcommand{\SR}{\mathcal {R}}

\setcounter{equation}{0} \setcounter{section}{0}

\newcommand{\ds}{\displaystyle}
\newcommand{\gl}{\lambda}
\newcommand{\gL}{\Lambda}
\newcommand{\gge}{\epsilon}
\newcommand{\gG}{\Gamma}
\newcommand{\ga}{\alpha}
\newcommand{\gb}{\beta}
\newcommand{\gd}{\delta}
\newcommand{\gD}{\Delta}
\newcommand{\gs}{\sigma}
\newcommand{\mbq}{\mathbb{Q}}
\newcommand{\mbr}{\mathbb{R}}
\newcommand{\mbz}{\mathbb{Z}}
\newcommand{\mbc}{\mathbb{C}}
\newcommand{\mbn}{\mathbb{N}}
\newcommand{\mbp}{\mathbb{P}}
\newcommand{\mbf}{\mathbb{F}}
\newcommand{\mbe}{\mathbb{E}}
\newcommand{\lcm}{\text{lcm}\,}
\newcommand{\mf}[1]{\mathfrak{#1}}
\newcommand{\ol}[1]{\overline{#1}}
\newcommand{\mc}[1]{\mathcal{#1}}
\newcommand{\nequiv}{\equiv\hspace{-.13in}/\;}

\title{Elliptic aliquot cycles of fixed length}
\author{Nathan Jones}
\date{}

\begin{abstract}
Silverman and Stange define the notion of an aliquot cycle of length $L$ for a fixed elliptic curve $E$ over 
$\mbq$, and conjecture an order of magnitude for the function which counts such aliquot cycles.  In the present 
note, we combine heuristics of Lang-Trotter with those of Koblitz to refine their conjecture to a precise 
asymptotic formula by specifying the appropriate constant.  We give a criterion for positivity of the 
conjectural constant, as well as some numerical evidence for our conjecture.
\end{abstract}

\maketitle

\section{Introduction} \label{introduction}

Let $E$ be an elliptic curve over $\mbq$ and fix a positive integer $L \geq 2$.  In analogy with the classical notion of an aliquot cycle, Silverman and Stange \cite{silvermanstange} define an $L$-tuple $(p_1, p_2, \dots, p_L)$ of distinct positive integers to be an \textbf{aliquot cycle of length $L$ for $E$} if each $p_i$ is a prime number of good reduction for $E$ and
\begin{equation*} 
p_{i+1} = | E(\mbf_{p_i}) | \quad \forall i \in \{1, 2, \dots, L-1 \} \quad \text{ and } \quad p_1 = | E(\mbf_{p_L}) |,
\end{equation*}
which may be more succinctly written as
\begin{equation} \label{aliquotdef}
p_{i+1} = | E(\mbf_{p_i}) |, \quad \forall i \in \mbz/L\mbz.
\end{equation}
When $L = 2$, an aliquot cycle is also referred to as an \textbf{amicable pair for $E$}.  As observed in
\cite[Remark 1.5]{silvermanstange}, there is an intimate connection between aliquot cycles for $E$ and elliptic
divisibility sequences, which relate to generalizations of classical index divisibility questions about Lucas
sequences (see also \cite{gottschlich}, which studies some distributional aspects of elliptic divisibility sequences).  

It is of interest to know how common such aliquot cycles are, so we presently consider the
function which counts aliquot cycles of fixed length for a fixed elliptic curve $E$ over $\mbq$.  More precisely, 
define an aliquot cycle 
$(p_1, p_2, \dots, p_L)$ to be \textbf{normalized} if $p_1 = \min \{ p_i : 1 \leq i \leq L \}$, and 
then write
\[
\pi_{E,L}(x) := | \{ p_1 \leq x : \text{ $\exists$ a normalized aliquot cycle $(p_1,p_2, \dots, p_L)$ for $E$} \} |.
\]
The behavior of $\pi_{E,L}(x)$ for large $x$ depends heavily on whether or not
$E$ has complex multiplication (CM), as the following conjecture indicates.
\begin{Conjecture} \label{silvermanstangeconjecture}
(Silverman-Stange)  Let $E$ be an elliptic curve over $\mbq$ and $L \geq 2$ a fixed integer, and assume that 
there are infinitely many primes $p$ such that $| E(\mbf_p) |$ is prime.  Then, as $x \rightarrow \infty$, one 
has
\[
\pi_{E,L}(x) \quad
\begin{cases}
 \asymp \frac{\sqrt{x}}{(\log x)^L} & \text{ if $E$ has no CM} \\
 \sim A_{E} \frac{x}{(\log x)^2} & \text{ if $E$ has CM and $L = 2$}, \\
\end{cases}
\]
where the implied constants in $\asymp$ are both positive and depend only on $E$ and $L$, and $A_{E}$ is a 
positive constant.
\end{Conjecture}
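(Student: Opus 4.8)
The plan is to derive these conjectural orders of magnitude heuristically, modelling the iteration $p \mapsto |E(\mbf_p)|$ along a cycle as a chain of quasi-independent primality and trace conditions, and then multiplying the corresponding densities. Writing $a_p := p + 1 - |E(\mbf_p)|$ for the trace of Frobenius, the defining relations \eqref{aliquotdef} read $a_{p_i} = p_i + 1 - p_{i+1}$ for $i \in \mbz/L\mbz$. By the Hasse bound $|a_{p_i}| \leq 2\sqrt{p_i}$ each consecutive pair satisfies $|p_{i+1} - p_i| \leq 2\sqrt{p_i} + 1$, so all of $p_1, \dots, p_L$ lie within $O(\sqrt{x})$ of one another; in particular $p_i \asymp x$ throughout, which lets me treat every local factor as evaluated at scale $x$.

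For the non-CM case I would count normalized cycles by summing over the smallest prime $p_1 \leq x$ and imposing the successive conditions in turn. Starting from a prime $p_1$, the integer $|E(\mbf_{p_1})|$ is \emph{determined}, and the event that it is prime is modelled, following Koblitz, by a density of order $1/\log x$; iterating, the primality of $p_2, \dots, p_L$ contributes $L-1$ such factors. The remaining (closing) relation $|E(\mbf_{p_L})| = p_1$ is equivalent to the fixed-trace condition $a_{p_L} = p_L + 1 - p_1$, and since for a non-CM curve the traces are expected to be Sato--Tate/Lang--Trotter distributed over the Hasse interval of length $\asymp \sqrt{x}$, pinning $a_{p_L}$ to a single prescribed value occurs with probability of order $1/\sqrt{x}$. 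Multiplying the prime count by these factors gives
\[
\pi_{E,L}(x) \;\asymp\; \underbrace{\frac{x}{\log x}}_{p_1\ \text{prime}} \cdot \underbrace{\frac{1}{(\log x)^{L-1}}}_{p_2,\dots,p_L\ \text{prime}} \cdot \underbrace{\frac{1}{\sqrt{x}}}_{\text{closure}} \;=\; \frac{\sqrt{x}}{(\log x)^L},
\]
as claimed. The implied constant is produced by replacing each $1/\log x$ and $1/\sqrt{x}$ by the corresponding Koblitz and Lang--Trotter local densities, taken over the relevant division fields of $E$, and positivity reduces to checking that no local factor is forced to vanish.

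For the CM case with $L = 2$ the decisive difference is that closure is no longer a fixed-trace condition but is forced by the CM structure. If $E$ has CM by an order in an imaginary quadratic field $K$ and $p$ is an ordinary prime, then $p = \pi\overline{\pi}$ splits in $K$ with Frobenius $\pi$, and $|E(\mbf_p)| = (\pi - 1)(\overline{\pi - 1}) = N(\pi - 1)$; supersingular primes give $|E(\mbf_p)| = p + 1$, which is even and hence never prime, so only split primes contribute. When $q := N(\pi - 1)$ is prime, $\pi - 1$ generates a prime of $K$ above $q$, and for the correct unit normalization of the Frobenius at $q$ one has $\pi_q = 1 - \pi$, whence $|E(\mbf_q)| = N(\pi_q - 1) = N(-\pi) = p$. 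Thus, once both $p$ and $q$ are prime, amicability holds on a set of Frobenius classes of positive \emph{constant} density rather than with probability $\asymp 1/\sqrt{x}$. The only surviving factors are then the prime density for $p$ and one Koblitz factor for the primality of $|E(\mbf_p)|$, so that
\[
\pi_{E,2}(x) \;\asymp\; \frac{x}{\log x}\cdot\frac{1}{\log x} \;=\; \frac{x}{(\log x)^2},
\]
and, upon inserting exact local densities, $\pi_{E,2}(x) \sim A_E\,\frac{x}{(\log x)^2}$ with $A_E$ assembled from the Koblitz local factors of $E$ together with the density of the Frobenius condition governing closure.

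The main obstacle, and the crux of the refinement, is the determination of the constant rather than the order of magnitude. The successive conditions along the cycle are not genuinely independent, since each $p_{i+1}$ is a deterministic function of $p_i$, so the heuristic must be reorganized as a single average over $p_1$ of a product of local factors attached to a compositum of division fields of $E$; one must then verify that the Chebotarev densities over this compositum factor correctly into separate Lang--Trotter and Koblitz contributions without hidden correlations between the trace condition and the primality conditions. In the CM case the extra subtlety is to pin down, through the theory of complex multiplication (the Frobenius at $q$ as the unit multiple $1-\pi$ of $\pi - 1$), the exact proportion of prime pairs that close up, and to confirm that the resulting $A_E$ is positive; I would address positivity by recasting the criterion as the non-vanishing of each local factor, exactly as in the Koblitz conjecture.
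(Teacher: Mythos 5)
Your non-CM heuristic has the same skeleton as the paper's own heuristics in Section \ref{heuristics}: $L$ Koblitz-type primality factors of size $1/\log x$, one Lang--Trotter closure factor of size $1/\sqrt{x}$ for the fixed-trace condition $\sum_i a_{p_i}(E) = L$, and Chebotarev correction factors over division fields, summed over $p_1 \leq x$. But be aware that the statement is a conjecture which the paper never proves; in fact the paper's main thrust is that the statement is expected to be \emph{false} as written, and this is exactly where your argument has a genuine gap. You assert that positivity of the implied constants ``reduces to checking that no local factor is forced to vanish,'' treating this as a routine verification as in Koblitz's conjecture. However, the hypothesis of the conjecture (infinitely many $p$ with $|E(\mbf_p)|$ prime) corresponds, in the paper's language, only to the existence of a \emph{directed edge} in the graph $\mc{G}_E$, whereas closure of an aliquot cycle of length $L$ requires a \emph{closed walk of length $L$} in $\mc{G}_E$ --- a strictly stronger condition. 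Proposition \ref{equivproposition} shows that when no closed walk exists, $C_{E,L} = 0$ and there are provably only finitely many aliquot cycles, so the lower bound implicit in $\asymp$ fails. The curve $E_3 : y^2 = x^3 - 3x + 4$ realizes this: its mod-$4$ Galois image \eqref{galoisatlevel4} produces the graph \eqref{graphGE}, which has directed edges (so, assuming Koblitz's conjecture, $E_3$ satisfies the hypothesis) but no closed walks; whenever $p_2 = |E_3(\mbf_{p_1})|$ is prime, the quantity $|E_3(\mbf_{p_2})|$ is forced to be even. So the Galois correlations that you defer to ``the determination of the constant rather than the order of magnitude'' are not merely a refinement of the constant --- they can annihilate it, and no heuristic of this shape can establish $\asymp$ with positive constants under the stated hypothesis alone. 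The paper's resolution is to replace the hypothesis by the closed-walk criterion (Conjecture \ref{mainconjecturewithcriterion}) and to specify the constant $C_{E,L}$ by \eqref{Casinverselimit}.

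Your CM discussion for $L = 2$ (closure forced by the Frobenius at $q$ being a unit multiple of $1 - \pi$, giving a constant positive density rather than a $1/\sqrt{x}$ factor) is consistent with the Silverman--Stange analysis, which the paper cites but does not reproduce; the paper's own heuristics are confined to the non-CM case.
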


\begin{Remark}
We may interpret the $L = 1$ case of \eqref{aliquotdef} as describing primes $p_1$ for which 
$p_1 = |E(\mbf_{p_1})|$.  Such primes are called \textbf{anomalous} primes and have been considered by 
Mazur \cite{mazur}.  The asymptotic count for anomalous primes up to $x$ is a special case of a conjecture 
of Lang and Trotter \cite{langtrotter}.
\end{Remark}

In \cite{silvermanstange}, Silverman and Stange focus on the intricacies of the CM case, proving that 
if $E$ has CM, $j_E \neq 0$ and $L \geq 3$, then $E$ any normalized aliquot cycle $(p_1,p_2,\dots, p_L)$ for 
$E$ must have
$p_1 < 5$ (so in particular, $\pi_{E,L}(x) = O(1)$).  The case $j_E = 0$ is apparently more complicated, and 
no proof is given that $\pi_{E,L}(x) = O(1)$ when $j_E = 0$ and $L > 3$.

In this note, we refine Conjecture \ref{silvermanstangeconjecture} to an asymptotic formula in the non-CM case.  
Heuristics will be developed which lead to the following conjecture.
\begin{Conjecture} \label{mainconjecture}
Let $E$ be an elliptic curve over $\mbq$ without complex multiplication and $L \geq 2$ a fixed integer.  Then 
there is a non-negative real constant $C_{E,L} \geq 0$ (see \eqref{Casinverselimit} below) so 
that, as $x \longrightarrow \infty$,
\begin{equation*} 
\pi_{E,L}(x) \sim 
C_{E,L} \int_2^x \frac{1}{2\sqrt{t} (\log t)^L} dt.
\end{equation*}
\end{Conjecture}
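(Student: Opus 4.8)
The plan is to derive the asymptotic as a Lang--Trotter-type heuristic, treating the length-$L$ cycle as a chain of Frobenius traces and isolating the single global constraint that forces it to close up. Writing $a_p = p + 1 - |E(\mbf_p)|$ for the trace of Frobenius, the recursion $p_{i+1} = |E(\mbf_{p_i})| = p_i + 1 - a_{p_i}$ shows that, once $p_1$ is fixed, the whole tuple $(p_1, \dots, p_L)$ is determined by the traces $a_{p_1}, \dots, a_{p_{L-1}}$, and that the closing relation $p_1 = |E(\mbf_{p_L})|$ is equivalent to the single linear condition
\[
\sum_{i=1}^L a_{p_i} = L.
\]
Since $|a_{p_i}| \leq 2\sqrt{p_i}$, all primes in a normalized cycle with $p_1 \leq x$ lie within $O(\sqrt{x})$ of one another, so I may treat them as having a common size $t \approx p_1$ throughout.

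First I would model $\pi_{E,L}(x)$ as an expected count $\tfrac{1}{L}\sum_{p \leq x} \Pr[\text{the chain from } p \text{ closes}]$, where the factor $1/L$ reflects that each (unnormalized) cyclic chain is detected once from each of its $L$ nearly-equal primes, whereas a normalized cycle is counted once. Under the standard heuristic that the traces $a_{p_1}, \dots, a_{p_L}$ behave like independent samples from the Sato--Tate distribution subject to the $\ell$-adic congruence constraints coming from the Galois action on torsion, this closing probability factors into (i) $L-1$ primality factors, one for each requirement that $p_{i+1} = p_i + 1 - a_{p_i}$ again be prime, each contributing $\sim 1/\log t$ in the spirit of Koblitz's heuristic for ``$|E(\mbf_p)|$ prime'', and (ii) one \emph{vertical} factor enforcing $\sum a_{p_i} = L$, which contributes $\sim c_L/\sqrt{t}$. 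Multiplying and summing against the prime-counting weight $1/\log t$ then produces
\[
\pi_{E,L}(x) \sim C_{E,L} \int_2^x \frac{dt}{2\sqrt{t}\,(\log t)^L},
\]
with the exponent $L$ on $\log t$ arising as $(L-1) + 1$ and the $\sqrt{t}$ coming precisely from the vertical factor.

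Next I would pin down $C_{E,L}$ by separating it into an archimedean factor and an arithmetic factor. The archimedean factor is governed by the $L$-fold additive convolution $g_L$ of the semicircle (Sato--Tate) density on $[-1,1]$, evaluated at its center: since $L \ll \sqrt{t}$, the constraint $\sum a_{p_i} = L$ picks out the hyperplane through the origin, giving a vertical weight $\tfrac{1}{2}g_L(0)/\sqrt{t}$ (recovering $1/(\pi\sqrt t)$ when $L=1$). The arithmetic factor is an infinite product over primes $\ell$ of local densities, each read off from the image $G_\ell$ of the mod-$\ell^k$ representation attached to $E$: for a random tuple $(g_1, \dots, g_L) \in G_\ell^L$ modeling the Frobenii, one measures the proportion for which every $p_{i+1} \equiv \det g_i + 1 - \tr g_i$ avoids $0 \bmod \ell$ and for which $\sum_i \tr g_i \equiv L \pmod{\ell^k}$, normalized against the unconstrained count. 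Assembling these over the torsion level as an inverse limit yields the closed form of \eqref{Casinverselimit}, and convergence of the product gives a finite $C_{E,L} \geq 0$.

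The main obstacle is that this is, by nature, a heuristic rather than a theorem: the crux is justifying that consecutive steps of the chain may be modeled by \emph{independent} Frobenius classes, so that the archimedean and $\ell$-adic contributions decouple and the local factors genuinely multiply. The true linkage $p_{i+1} = p_i + 1 - a_{p_i}$ correlates the constraints, and the delicate bookkeeping is to set up the joint Frobenius distribution at the $L$ primes so that it is the \emph{single} global relation $\sum a_{p_i} = L$, and not $L$ separate vertical conditions, that survives; getting this right is exactly what distinguishes the correct exponent $(\log x)^{-L}$ and the correct constant from a naive guess. It is also here that one must allow $C_{E,L}$ to vanish, since a congruence obstruction at some $\ell$ can make ``$\sum a_{p_i} \equiv L$ with all $p_i$ coprime to $\ell$'' impossible, and tracking this kills the corresponding local factor, yielding the criterion for positivity promised in the abstract.
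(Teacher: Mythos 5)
Your derivation has the same skeleton as the paper's heuristic in Section~\ref{heuristics}: the reduction of the closing condition to $\sum_{i=1}^L a_{p_i}(E)=L$, the factor $1/L$ for normalization, the $(\log t)^{-L}$ from $L$ primality requirements, and the vertical factor $\phi_L(0)/(2\sqrt{t})$ from the $L$-fold Sato--Tate convolution all agree. The genuine gap is in your arithmetic local densities. On a tuple $(g_1,\dots,g_L)\in G_\ell^L$ you impose only (a) that each $\det g_i + 1 - \tr g_i$ avoid $0 \bmod \ell$, and (b) that $\sum_i \tr g_i \equiv L$. But the Frobenius at $p_{i+1}$ is not free relative to $g_i$: since $\det \frob_{\mbq(E[n])}(p_{i+1}) \equiv p_{i+1} \pmod{n}$ (Lemma~\ref{wellknownlemma}), the correct model must impose the linkage $\det g_{i+1} \equiv \det g_i + 1 - \tr g_i$ at every step, cyclically. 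This is exactly what the paper's set $\gal(\mbq(E[n])/\mbq)^L_{\text{ali-cycle}}$ encodes, with normalization $n^L$: each linkage is a density-$1/n$ condition, compensated by the $n/\varphi(n)$ gain in the prime number theorem in arithmetic progressions for a prime known to lie in a fixed unit class. Your conditions (a) are only the weaker Koblitz-type avoidance conditions, and your single power of $n$ accounts only for (b); the resulting constant is not the one in \eqref{Casinverselimit}.

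This is not cosmetic, because the linkage is where the obstructions live. Take the paper's curve $E_3: y^2 = x^3 - 3x + 4$ and $L$ even. Mod $4$ the realized $(\tr,\det)$ pairs are $(2,1)$, $(2,-1)$, $(-1,1)$, $(0,-1)$, and the only one with $d+1-t$ odd is $(-1,1)$; so your conditions (a) and (b) are satisfied by taking every $g_i$ of type $(-1,1)$ (note $-L \equiv L \bmod 4$ when $L$ is even), and your model sees no obstruction at $2$, hence predicts infinitely many aliquot cycles for $E_3$. The linkage kills this: if $\frob_{\mbq(E[4])}(p_1)$ has type $(-1,1)$, then $p_2 \equiv 3 \pmod 4$, so $\frob_{\mbq(E[4])}(p_2)$ has determinant $-1$, forcing type $(0,-1)$ or $(2,-1)$ and hence $|E(\mbf_{p_2})|$ even; by Proposition~\ref{equivproposition} this curve has only finitely many aliquot cycles and $C_{E_3,L}=0$. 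Even absent such an obstruction your factors are numerically wrong: for $L=2$ at a prime $\ell$ with surjective mod-$\ell$ image, the correct factor is $\ell^2\,|GL_2(\mbf_\ell)^2_{\text{ali-cycle}}|\,/\,|GL_2(\mbf_\ell)|^2 = 1 - \ell^{-2} + O(\ell^{-3})$, whereas a direct count of your set (conditions (a), (b) with one power of $\ell$) gives $1 - 2\ell^{-1} + O(\ell^{-2})$, whose product diverges to $0$; renormalizing by $(1-1/\ell)^L$ in the Koblitz manner restores convergence but still yields a product differing from the paper's at order $\ell^{-2}$ in each factor. So the shape of your asymptotic is right, but the bookkeeping you flagged as ``delicate'' must be done via the ali-cycle sets (determinant linkage at every step), not via unit-avoidance plus a single trace-sum condition.
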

\begin{Remark}
It is possible for the constant $C_{E,L}$ to be zero, in which case $\ds \lim_{x \rightarrow \infty} \pi_{E,L}(x)$ 
is provably finite.  Thus, in case $C_{E,L} = 0$, let us interpret the above asymptotic to mean that 
$\ds \lim_{x \rightarrow \infty} \pi_{E,L}(x) < \infty$.
\end{Remark}
\begin{Remark}
By integration by parts, one has
\[
\int_2^x \frac{1}{2\sqrt{t} (\log t)^L} dt \; = \; \frac{\sqrt{x}}{(\log x)^L} + 
O\left( \frac{\sqrt{x}}{(\log x)^{L+1}} \right).
\]
Thus, Conjecture \ref{mainconjecture} is consistent with Conjecture \ref{silvermanstangeconjecture}. 
In practice, the error term $\ds \left| \pi_{E,L}(x) - C_{E,L} \int_2^x \frac{1}{2\sqrt{t} (\log t)^L} dt \right|$ 
should be smaller than $\ds \left| \pi_{E,L}(x) - C_{E,L} \frac{\sqrt{x}}{(\log x)^L} \right|$, just as in 
the case of the prime number theorem.
\end{Remark}

Consider Table 1, which lists the values of $\pi_{E,2}(x)$ for a few non-CM curves $E$ and 
various magnitudes $x$.
\begin{table}
\[
\begin{array}{|c||c|c|c|c|c|} \hline E & x=10^6 & x=10^8 & x=10^{10} & x = 10^{12} & x = 10^{13} \\ 
\hline\hline E_1: y^2 +y = x^3 - x & 0 & 1 & 16 & 115 & 332 \\ 
\hline E_2 : y^2 = x^3 + 6x - 2 & 0 & 5 & 32 & 208 & 563 \\ 
\hline E_3 : y^2 = x^3 - 3x + 4 & 0 & 0 & 0 & 0 & 0 \\ 
\hline
\end{array}
\]
\tablename{ 1:  Values of $\pi_{E,2}(x)$}
\end{table}
Note that $\pi_{E_2,2}(x)$ is larger than $\pi_{E_1,2}(x)$.  This difference 
is explained by the associated constants appearing in Conjecture \ref{mainconjecture}.  Indeed, a computation 
shows that
\[
\frac{C_{E_2,2}}{C_{E_1,2}} \approx 1.714.
\]
Also note that $\pi_{E_3,2}(10^{13}) = 0$.  The additional fact that 
$| \{ p \leq 10^6 : |E_3(\mbf_p)| \text{ is prime} \} | = 3236$ indicates that there probably are infinitely 
many primes $p$ for which $| E_3(\mbf_p) |$ is prime, in which case the above data suggests that $E_3$ might 
be a counterexample to Conjecture \ref{silvermanstangeconjecture}.  We will later see that 
$C_{E_3,2} = 0$, and that $E_3$ is indeed a counterexample, assuming a conjecture of 
Koblitz on the primality of $|E(\mbf_p)|$.

\begin{remark}
The heuristics which lead to Conjecture \ref{mainconjecture} are in the style of Koblitz and Lang-Trotter, 
whose conjectures have been proven ``on average over elliptic curves $E$'' (see \cite{bacoda} and \cite{dapa}).  
It might be interesting to see if one could also prove an average version of Conjecture \ref{mainconjecture}.
\end{remark}

\subsection{Positivity of $C_{E,L}$ and a directed graph $\mc{G}_E$}
In the interest of characterizing the non-CM elliptic curves which have infinitely many aliquot cycles of length 
$L$, we will state a graph-theoretic criterion for positivity of $C_{E,L}$.  Recall that a \textbf{directed graph} 
$\mc{G}$ is a pair $(\mc{V},\mc{E})$, where $\mc{V} = \mc{V}(\mc{G})$ is an arbitrary set of \textbf{vertices} and 
$\mc{E} = \mc{E}(\mc{G}) \subseteq \mc{V} \times \mc{V}$ is a subset of \textbf{directed edges}.   The 
sequence of vertices $(v_1, v_2, v_3, \dots, v_n)$ is a \textbf{closed walk of length $n$} if and only if 
$(v_i, v_{i+1}) \in \mc{E}$, for each $i \in \mbz/n\mbz = \{1, 2, 3, \dots, n \}$.  Note that closed walks may 
have repeated vertices.  For instance, if $(v,v) \in \mc{E}$ for some vertex $v$ (i.e. if $\mc{G}$ has a 
\emph{loop} at a vertex $v$), then $\mc{G}$ has closed walks of any length.

We will associate to an elliptic curve $E$ a directed graph $\mc{G}_E$.   First, consider the $n$-th division field $\mbq(E[n])$ of $E$, obtained by adjoining to $\mbq$ the $x$ and $y$-coordinates of the $n$-torsion $E[n]$ of a given Weierstrass model of $E$.  The extension $\mbq(E[n])$ is Galois over $\mbq$, and once we fix a basis over $\mbz/n\mbz$ of $E[n]$, we may view
\begin{equation} \label{subsetof}
\gal(\mbq(E[n])/\mbq) \subseteq GL_2(\mbz/n\mbz).
\end{equation}
We will now attach to $\gal(\mbq(E[n])/\mbq)$ a directed graph $\mc{G}_E(n)$.   Viewing Galois automorphisms as $2\times 2$ matrices via \eqref{subsetof}, the vertex set $\mc{V}(n)$ of our graph $\mc{G}_E(n)$ is
\[
\mc{V}(n) := \{ (t,d) \in \mbz/n\mbz \times (\mbz/n\mbz)^\times : \exists g \in \gal(\mbq(E[n])/\mbq) \text{ with } \tr g = t, \, \det g = d \}.    
\]
We define the set $\mc{E}(n) \subseteq \mc{V}(n) \times \mc{V}(n)$ of directed edges by declaring that 
$(v_1,v_2) \in \mc{E}(n)$ if and only if $d_1 + 1 - t_1 = d_2$, where $v_i = (t_i,d_i) \in \mc{V}(n)$.

Let $m_E$ denote the \textbf{torsion conductor} of $E$, which is defined as the smallest positive integer $m$ for which
\[
\forall n \in \mbz_{>0}, \quad \gal(\mbq(E[n])/\mbq) = \pi^{-1}(\gal(\mbq(E[\gcd(m,n)])/\mbq)),
\]
where $\pi : GL_2(\mbz/n\mbz) \rightarrow GL_2(\mbz/\gcd(m,n)\mbz)$ is the canonical projection.  (The existence of a torsion conductor $m_E$ for a non-CM elliptic curve $E$ is a celebrated theorem of Serre \cite{serre2}.)  Finally, we define the directed graph $\mc{G}_E$ to be the above graph at level $m_E$:
\[
\mc{G}_E := \mc{G}_E(m_E).
\]

The following version of Conjecture \ref{mainconjecture} states a criterion for positivity of $C_{E,L}$ in terms of the directed graph $\mc{G}_E$.
\begin{Conjecture} \label{mainconjecturewithcriterion}
Let $E$ be an elliptic curve over $\mbq$ without complex multiplication and $L \geq 2$ a fixed integer.  Suppose that the directed graph $\mc{G}_E$ has a closed walk of length $L$.  Then there are infinitely many aliquot cycles of length $L$ for $E$.  More precisely, there is a positive constant $C_{E,L} > 0$ so that, as $x \longrightarrow \infty$,
\[
\pi_{E,L}(x) \sim 
C_{E,L} \int_2^x \frac{1}{2\sqrt{t} (\log t)^L} dt.
\]
\end{Conjecture}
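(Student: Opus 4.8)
The plan is to obtain the asymptotic from the Lang--Trotter and Koblitz circles of heuristics, read off $C_{E,L}$ from that derivation (the quantity recorded in \eqref{Casinverselimit}), and then prove \emph{rigorously} that this constant is positive precisely when $\mc{G}_E$ has a closed walk of length $L$. First I rewrite \eqref{aliquotdef} via $|E(\mbf_p)| = p+1-a_p$: a normalized aliquot cycle is a tuple of primes with $a_{p_i} = p_i + 1 - p_{i+1}$ for all $i \in \mbz/L\mbz$, summing gives the closure relation $\sum_{i=1}^L a_{p_i} = L$, and Hasse's bound forces the $p_i$ to lie within $O(\sqrt{p_1})$ of one another. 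I then organize the count as a sum over the least prime $p_1 \le x$ and over the free traces $t_1,\dots,t_{L-1}$ (with $t_L$ fixed by closure), weighting each configuration by the Lang--Trotter probability that $a_{p_i}=t_i$ and the Koblitz probability that each $p_i$ is prime. Modelling the traces by the Sato--Tate measure (a factor $\tfrac{1}{\pi\sqrt{p}}\sqrt{1-t_i^2/4p}$ each) and the primalities by $1/\log p$ each, the substitution $t_i = 2\sqrt{p}\,u_i$ turns the inner sum into a fixed positive integral $I_L = \int \prod_{i<L}\sqrt{1-u_i^2}\,\sqrt{1-(u_1+\cdots+u_{L-1})^2}\,du$ over the slab $[-1,1]^{L-1}\cap\{|u_1+\cdots+u_{L-1}|\le 1\}$, the $L$ factors $1/\sqrt p$ and the Jacobian $(2\sqrt p)^{L-1}$ collapse to a single $1/\sqrt p$, and summing over $p_1\le x$ yields the main term $\int_2^x \tfrac{dt}{2\sqrt t(\log t)^L}$. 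This isolates $C_{E,L}$ as $I_L$ times elementary constants, the rotation factor $1/L$, and an infinite product of local densities.

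The second step is to identify those local densities with closed-walk counts in $\mc{G}_E(n)$. Because $\det\frob_p \equiv p$ and $\tr\frob_p \equiv a_p \pmod n$, the data $(\tr\frob_{p_i},\det\frob_{p_i})$ reduces the aliquot relation to $d_{i+1}=d_i+1-t_i$, which is exactly the edge condition defining $\mc{E}(n)$; and the requirement $d_i\in(\mbz/n\mbz)^\times$ built into $\mc{V}(n)$ simultaneously records good reduction of $p_i$ together with the Koblitz condition that $|E(\mbf_{p_{i-1}})|$ be prime to $n$. Hence a local aliquot cycle modulo $n$ is precisely a closed walk of length $L$ in $\mc{G}_E(n)$, visited (by Chebotarev) with weight proportional to $|\{g \in \gal(\mbq(E[n])/\mbq) : \tr g = t,\ \det g = d\}|$ at each vertex $(t,d)$. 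The level-$n$ local density is thus a positive multiple of the weighted number of length-$L$ closed walks in $\mc{G}_E(n)$, and in particular it is positive if and only if $\mc{G}_E(n)$ admits one. Since $I_L>0$ always, positivity of $C_{E,L}$ is governed entirely by these graph-theoretic factors, and \eqref{Casinverselimit} realizes $C_{E,L}$ as $I_L$ times the limit over $n$ (ordered by divisibility) of the normalized closed-walk counts.

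The final step descends from the whole tower to $\mc{G}_E = \mc{G}_E(m_E)$. Reduction modulo $\gcd(m_E,n)$ carries edges to edges, so a closed walk in $\mc{G}_E(n)$ projects to one in $\mc{G}_E(\gcd(m_E,n))$; hence if $\mc{G}_E$ has \emph{no} closed walk of length $L$, neither does $\mc{G}_E(n)$ for $m_E\mid n$, the local density vanishes, and $C_{E,L}=0$. Conversely, assume $\mc{G}_E$ has such a walk and write $n = n'n''$ with $n'$ supported on the primes dividing $m_E$ and $\gcd(n'',m_E)=1$. Serre's torsion-conductor property gives $\gal(\mbq(E[n''])/\mbq) = GL_2(\mbz/n''\mbz)$, for which $(1,d)$ is a vertex and $(1,d)\to(1,d)$ is a loop (as $d+1-1=d$); thus $\mc{G}_E(n'')$ has closed walks of every length and contributes local factors $1+O(\ell^{-2})$, so the tail over $\ell\nmid m_E$ converges to a positive number. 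Together with a lift of the given walk from level $m_E$ to level $n'$, this bounds every local density below by a positive constant and gives $C_{E,L}>0$, whence $\pi_{E,L}(x)\to\infty$ and infinitely many cycles. I expect this last uniform positivity to be the main obstacle: one must lift a length-$L$ closed walk through the full preimage $\gal(\mbq(E[n])/\mbq)=\pi^{-1}(\gal(\mbq(E[\gcd(m_E,n)])/\mbq))$ at every level and bound the densities away from $0$ uniformly in $n$, and one must also justify the independence of the $L$ trace and primality events on which the heuristic evaluation of $C_{E,L}$ rests.
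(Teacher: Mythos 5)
Your overall route is the same as the paper's: derive the asymptotic from Lang--Trotter/Koblitz/Sato--Tate heuristics with the constant \eqref{Casinverselimit}, factor that constant into local densities, translate local positivity into existence of closed walks (your weighted-walk identity is exactly the map $g \mapsto (\tr g, \det g)$ used in Proposition \ref{equivproposition}), and handle the tail over $\ell \nmid m_E$ via the full $GL_2$ image and factors $1 + O_L(\ell^{-2})$ as in \eqref{lefttoreader}. But there is a genuine gap, and you have in fact located it yourself: the step you defer (``lift a length-$L$ closed walk through the full preimage at every level and bound the densities away from $0$ uniformly in $n$'') is not a technicality to be checked but the core of the rigorous half of the argument. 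The constant \eqref{Casinverselimit} is a limit over the tower $n_k = \prod_{p \le k} p^k$, and positivity of each finite-level density does not imply positivity of the limit: a priori the densities at the levels supported on the primes dividing $m_E$ could decay to $0$ as $k \to \infty$, and likewise the factor at $\ell^k$ for $\ell \nmid m_E$ could drift away from the factor at $\ell$, destroying convergence of the Euler product. Your loop $(1,d) \to (1,d)$ in the full-$GL_2$ graph gives positivity at each finite level but no uniform control, and ``bounded below uniformly'' would in any case not show that the limit defining $C_{E,L}$ exists.

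The paper closes this gap with Lemma \ref{stabilizeslemma}, which proves the densities are \emph{exactly constant} up the tower: if $n \mid n'$ and every prime dividing $n'$ divides $n$, then for any subgroup $G \subseteq GL_2(\mbz/n\mbz)$,
\[
\frac{(n')^L\,|(\pi^{-1}(G))_{\text{ali-cycle}}^L|}{|\pi^{-1}(G)^L|}
= \frac{n^L\,|G_{\text{ali-cycle}}^L|}{|G^L|}.
\]
This is proved by showing that each ali-cycle tuple at level $n$ has exactly $\ell^{3L}$ lifts at level $\ell n$, which reduces to the surjectivity of the affine-linear map $(A_i)_i \mapsto (-\det g_{i+1}\tr A_{i+1} + \det g_i \tr A_i - \tr(g_i A_i))_i$ on $M_{2\times 2}(\mbf_\ell)^L$; surjectivity uses crucially that an ali-cycle tuple satisfies $g_i \not\equiv 0, I \pmod{\ell}$ (otherwise $\det g_i$ or $\det g_{i+1}$ would fail to be a unit). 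Combined with the CRT splitting (your step, the paper's Lemma \ref{splitslemma}), this yields Proposition \ref{explicitCprop}: the limit equals the density at level $m_E$ times an absolutely convergent product of positive local factors, whence $C_{E,L} > 0$ if and only if $\gal(\mbq(E[m_E])/\mbq)_{\text{ali-cycle}}^L \neq \emptyset$, i.e.\ if and only if $\mc{G}_E$ has a closed walk of length $L$. Until you supply this exact-lifting computation (or some substitute that gives both existence and positivity of the limit), your final step does not go through.
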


\begin{remark}
If $\mc{G}_E$ does not have a closed walk of length $L$, then $C_{E,L} = 0$ and there are at most finitely many 
aliquot cycles of length $L$ for $E$ (see Proposition \ref{equivproposition} below).
\end{remark}

In Section \ref{theconstant}, we will write down the constant $C_{E,L}$ explicitly as an ``almost Euler product'' 
and discuss its positivity in terms of the graph $\mc{G}_E$.  In Section \ref{heuristics}, we will develop the 
heuristics which lead to Conjecture \ref{mainconjecture}.  In Section \ref{numerics}, we will provide some 
numerical evidence for Conjecture \ref{mainconjecture} by examining the order of magnitude of 
$\ds \pi_{E,L}(x) - C_{E,L} \int_2^x \frac{1}{2\sqrt{t} (\log t)^L} \, dt$ for various elliptic curves $E$ and 
$L \in \{2, 3\}$.

\section{The constant} \label{theconstant}

We now describe in detail the constant $C_{E,L}$.  The following lemma allows us to interpret \eqref{aliquotdef} 
in terms of the Frobenius automorphisms\footnote{The Frobenius automorphism in $\gal(\mbq(E[n])/\mbq)$ attached to 
an unramified rational prime $p$ is only defined up to conjugation in $\gal(\mbq(E[n])/\mbq)$.  Here and 
throughout the paper, we understand $\frob_{\mbq(E[n])}(p)$ to be any choice of such a Frobenius automorphism.} 
$\frob_{\mbq(E[n])}(p_i) \in \gal(\mbq(E[n])/\mbq)$ attached to the various primes $p_i$.  Recall the trace of 
Frobenius $a_p(E) \in \mbz$, which satisfies the equation
\[
| E(\mbf_p) | =: p + 1 - a_p(E)
\]
as well as the Hasse bound
\begin{equation} \label{hassebound}
| a_p(E) | \leq 2\sqrt{p}.
\end{equation}
\begin{lemma} \label{wellknownlemma}
For any positive integer $n$ and any prime $p$ of good reduction for $E$ which does not divide $n$, $p$ is unramified in $\mbq(E[n])$ and for any Frobenius automorphism $\frob_{\mbq(E[n])}(p) \in \gal(\mbq(E[n])/\mbq)$, we have
\begin{equation*} 
\tr ( \frob_{\mbq(E[n])}(p)) \equiv a_p(E) \mod n
\end{equation*}
and
\begin{equation*} 
\det (\frob_{\mbq(E[n])}(p)) \equiv p \mod n.
\end{equation*}
\end{lemma}
\begin{proof}
See \cite[IV-4--IV-5]{serre}.
\end{proof}
For any subset $G \subseteq GL_2(\mbz/n\mbz)$, define 
\begin{equation*} 
G_{\text{ali-cycle}}^{L} := \left\{ (g_1,g_2, \dots, g_L) \in G^L : \; \forall i \in \mbz/L\mbz, \, 
\det(g_{i+1}) = \det(g_i) + 1 - \tr(g_i) \right\}.
\end{equation*}
Note that, by Lemma \ref{wellknownlemma}, if $(p_1,p_2, \dots, p_L)$ is an aliquot cycle of length $L$ for $E$, then
\begin{equation} \label{frobeniusinset}
(\frob_{\mbq(E[n])}(p_1), \frob_{\mbq(E[n])}(p_2), \dots, \frob_{\mbq(E[n])}(p_L)) \in \gal(\mbq(E[n])/\mbq)_{\text{ali-cycle}}^L.
\end{equation}
Next, let $\ds \phi(x) := \frac{2}{\pi} \sqrt{1 - x^2}$ be the distribution function of Sato-Tate, which 
(assuming $E$ has no CM) conjecturally\footnote{Assuming $E$ has non-integral $j$-invariant, the 
Sato-Tate conjecture is now a theorem of L. Clozel, M. Harris, N. Shepherd-Barron, and R. Taylor (see 
\cite{taylor} and the references therein).} satisfies
\begin{equation*} 
\lim_{x \rightarrow \infty} \frac{| \{ p \leq x : \frac{a_p(E)}{2\sqrt{p}} \in I \subseteq [-1,1] \} | }{| \{ p \leq x \} |} = \int_I \phi(x) dx.
\end{equation*}
In other words, $\phi$ is the density function of $a_p(E)/2\sqrt{p}$, viewed as a random variable.  Denote by 
$\phi_L := \phi * \phi * \dots * \phi$ the $L$-fold convolution of $\phi$ with itself, which (again assuming the
Sato-Tate conjecture) is the density function of the random variable
\[
\sum_{i = 1}^L \frac{a_{p_i}(E)}{2\sqrt{p_i}},
\]
provided the various terms $a_{p_i}(E)/2\sqrt{p_i}$ are ``statistically independent.''  Since the primes
$p_1, p_2, \dots, p_L$ belonging to an aliquot cycle must be close to one another (i.e. within $\approx L\sqrt{t}$ 
of one another where $p_1 \approx t$, by the Hasse bound \eqref{hassebound}), we are really assuming statistical 
independence \emph{in short intervals} of the various terms $a_{p_i}(E)/2\sqrt{p_i}$.
Finally, for a positive integer $k$, put
\begin{equation*} 
n_k := \prod_{p \leq k} p^k.
\end{equation*}
In Section \ref{heuristics}, we will develop heuristics which predict Conjecture \ref{mainconjecture}, with
\begin{equation} \label{Casinverselimit}
C_{E,L} := \frac{\phi_L(0)}{L} \cdot \lim_{k \rightarrow \infty} \frac{n_k^L | \gal(\mbq(E[n_k])/\mbq)_{\text{ali-cycle}}^L |}{| \gal(\mbq(E[n_k])/\mbq)^L |}.
\end{equation}

\subsection{The constant as a product}

We will presently prove the following proposition, which gives a more explicit expression of $C_{E,L}$ as a convergent Euler product.  Recall that $m_E$ denotes the torsion conductor of $E$, i.e. the smallest positive integer $m$ for which
\[
\forall n \in \mbz_{>0}, \quad \gal(\mbq(E[n])/\mbq) = \pi^{-1}(\gal(\mbq(E[\gcd(m,n)])/\mbq)),
\]
where $\pi : GL_2(\mbz/n\mbz) \rightarrow GL_2(\mbz/\gcd(m,n)\mbz)$ is the canonical projection.  
\begin{proposition} \label{explicitCprop}
For a positive integer $k$, let $\ds n_k := \prod_{p \leq k} p^k$.  Then one has
\[
\lim_{k \rightarrow \infty} \frac{n_k^L | \gal(\mbq(E[n_k])/\mbq)_{\text{ali-cycle}}^L |}{| \gal(\mbq(E[n_k])/\mbq)^L |} = \frac{m_E^L | \gal(\mbq(E[m_E])/\mbq)_{\text{ali-cycle}}^L |}{| \gal(\mbq(E[m_E])/\mbq)^L |} \cdot \prod_{\ell \nmid m_E} \frac{\ell^L | GL_2(\mbf_\ell)_{\text{ali-cycle}}^L |}{| GL_2(\mbf_\ell)^L |}
\]
Furthermore,
\begin{equation} \label{lefttoreader}
0 < \frac{\ell^L | GL_2(\mbf_\ell)_{\text{ali-cycle}}^L |}{| GL_2(\mbf_\ell)^L |} = 1 + O_L\left( \frac{1}{\ell^2} \right),
\end{equation}
so the infinite product $\ds \prod_{\ell \nmid m_E} \frac{\ell^L | GL_2(\mbf_\ell)_{\text{ali-cycle}}^L |}{| GL_2(\mbf_\ell)^L |}$ converges absolutely.
\end{proposition}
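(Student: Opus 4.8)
The plan is to reduce the limit to a product of purely local factors and then evaluate each factor. For a subset $G\subseteq GL_2(\mbz/N\mbz)$ write $F_N(G):=N^L|G_{\text{ali-cycle}}^L|/|G|^L$. The first step is to record that $F$ is multiplicative across coprime moduli: if $N=N_1N_2$ with $\gcd(N_1,N_2)=1$ and $G=G_1\times G_2$ under the Chinese Remainder isomorphism $GL_2(\mbz/N\mbz)\cong GL_2(\mbz/N_1\mbz)\times GL_2(\mbz/N_2\mbz)$, then, since $\tr$ and $\det$ are computed componentwise and the relation $\det(g_{i+1})=\det(g_i)+1-\tr(g_i)$ holds modulo $N$ iff it holds modulo each $N_j$, one has $G_{\text{ali-cycle}}^L\cong(G_1)_{\text{ali-cycle}}^L\times(G_2)_{\text{ali-cycle}}^L$ and hence $F_N(G)=F_{N_1}(G_1)F_{N_2}(G_2)$. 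For $k$ large enough that $m_E\mid n_k$, the defining property of the torsion conductor gives $\gal(\mbq(E[n_k])/\mbq)=\pi^{-1}(\gal(\mbq(E[m_E])/\mbq))$, which via CRT is the direct product of the subgroup $\gal(\mbq(E[A])/\mbq)\subseteq GL_2(\mbz/A\mbz)$ supported on the primes $\ell\mid m_E$ (here $A:=\prod_{\ell\mid m_E}\ell^k$) with the full group $GL_2(\mbz/\ell^k\mbz)$ for each remaining prime $\ell\le k$. Multiplicativity then factors $F_{n_k}(\gal(\mbq(E[n_k])/\mbq))$ as $F_A(\gal(\mbq(E[A])/\mbq))\cdot\prod_{\ell\le k,\,\ell\nmid m_E}F_{\ell^k}(GL_2(\mbz/\ell^k\mbz))$.

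The crux is a stability statement asserting that each local factor is independent of the exponent: for $m\mid m'$ with the same prime support and $\tilde G:=\pi^{-1}(G)$ under $\pi:GL_2(\mbz/m'\mbz)\to GL_2(\mbz/m\mbz)$, I claim $|\tilde G_{\text{ali-cycle}}^L|=(m'/m)^{3L}|G_{\text{ali-cycle}}^L|$. Since $|\tilde G|=(m'/m)^4|G|$, this gives $F_{m'}(\tilde G)=F_m(G)$; applied to $G=GL_2(\mbf_\ell)$ it yields $F_{\ell^k}(GL_2(\mbz/\ell^k\mbz))=F_\ell(GL_2(\mbf_\ell))$ for $\ell\nmid m_E$, and applied with $m=m_E$, $m'=A$ it collapses the $m_E$-factor to $F_{m_E}(\gal(\mbq(E[m_E])/\mbq))$ (here I use $\gal(\mbq(E[A])/\mbq)=\pi^{-1}(\gal(\mbq(E[m_E])/\mbq))$, again from the torsion conductor property). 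Letting $k\to\infty$ and invoking the absolute convergence proved below then produces the stated identity. To prove the stability identity I would reduce by CRT and induction to a single step $m'=\ell m$ and count the lifts of a fixed ali-cycle tuple: writing lifts as $\hat g_i(I+\ell^aM_i)$ with $M_i\in M_2(\mbf_\ell)$, the ali-cycle condition modulo $m'$ becomes an inhomogeneous system of $L$ linear equations over $\mbf_\ell$ in the $M_i$ whose solution count is generically $\ell^{3L}$.

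The hard part will be exactly here. At tuples with scalar entries — which the Galois image necessarily contains — the linearization degenerates: for scalar $\hat g_i$ the joint functional $M\mapsto(\tr M,\tr(\hat g_iM))$ drops rank, so the per-tuple lift count is not uniformly $\ell^{3L}$ and a naive Hensel/smoothness argument for the ali-cycle locus fails at these points. To get the exact equality I would bypass the per-tuple count and compute the aggregate directly by additive-character orthogonality: expanding each indicator $\mathbf 1[\det g_{i+1}-\det g_i-1+\tr g_i\equiv0]$ into additive characters modulo $m'$ and interchanging sums factors $|G_{\text{ali-cycle}}^L|$ as a weighted trace of a product of Weyl-type sums $W_G(\alpha,\beta):=\sum_{g\in G}\alpha(\det g)\beta(\tr g)$. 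This form treats scalar and non-scalar contributions on the same footing, and the passage from $W_G$ to $W_{\tilde G}$ under $\pi^{-1}$ exposes the exact factor $(m'/m)^{3L}$ regardless of scalars.

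Finally, I would prove the estimate \eqref{lefttoreader} by a direct computation at level $\ell$. Reparametrizing an ali-cycle tuple by its determinant sequence $(d_1,\dots,d_L)\in(\mbf_\ell^\times)^L$ (so that $t_i=d_i+1-d_{i+1}$ is forced and the cyclic constraint becomes automatic) identifies $|GL_2(\mbf_\ell)_{\text{ali-cycle}}^L|$ with $\tr(\mathbf T^L)$, where $\mathbf T$ is the $(\ell-1)\times(\ell-1)$ transfer matrix $\mathbf T_{d,d'}=N(d+1-d',d)$ and $N(t,d):=|\{g\in M_2(\mbf_\ell):\tr g=t,\ \det g=d\}|$. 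The classical evaluation $N(t,d)=\ell^2+\ell\,\chi(t^2-4d)$ (with $\chi$ the quadratic character, $\chi(0)=0$) writes $\mathbf T=\ell^2J+\ell\mathbf X$, $J$ the all-ones matrix and $\mathbf X_{d,d'}=\chi((d+1-d')^2-4d)$. Expanding $\tr(\mathbf T^L)=\sum_{S\subseteq\mbz/L\mbz}\ell^{2L-|S|}\Sigma_S$ with $\Sigma_S=\sum_{(d_i)}\prod_{i\in S}\mathbf X_{d_i,d_{i+1}}$, the term $S=\emptyset$ equals $\ell^{2L}(\ell-1)^L$ and, after normalization by $\ell^L/|GL_2(\mbf_\ell)|^L$, contributes exactly $(1-\ell^{-2})^{-L}=1+O_L(\ell^{-2})$; the single-edge terms ($|S|=1$) are controlled by the character cancellation $\sum_{d,d'}\mathbf X_{d,d'}=-2\ell+3=O(\ell)$, giving $O_L(\ell^{3L-2})$, while the terms with $|S|\ge2$ are $O_L(\ell^{3L-2})$ already by the trivial bound $|\Sigma_S|\le(\ell-1)^L$. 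Thus the whole ratio is $1+O_L(\ell^{-2})$. Positivity is immediate because every entry of $\mathbf T$ is at least $\ell^2-\ell>0$, so $\mathbf T^L$ has strictly positive entries and $\tr(\mathbf T^L)>0$; absolute convergence of the product then follows from $\sum_\ell\ell^{-2}<\infty$.
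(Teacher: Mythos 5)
Your plan has the same skeleton as the paper's own proof: your multiplicativity step is the paper's Lemma~\ref{splitslemma}, and your stability claim $|\pi^{-1}(G)_{\text{ali-cycle}}^L| = (m'/m)^{3L}\,|G_{\text{ali-cycle}}^L|$ is the paper's Lemma~\ref{stabilizeslemma}. Two parts of your write-up are solid: the CRT step, and your transfer-matrix proof of \eqref{lefttoreader}, which is correct, complete, and more informative than the paper's treatment (the paper leaves \eqref{lefttoreader} as an exercise). The fatal problem is the stability step, and it is not a gap you can close with a cleverer argument: the identity you are trying to prove is \emph{false}, and the scalar tuples you flagged are exactly what break it.

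Concretely, take $\ell = 3$, $L = 2$, $G = GL_2(\mbf_3)$, $\pi : GL_2(\mbz/9\mbz) \to GL_2(\mbf_3)$. One has $|GL_2(\mbf_3)_{\text{ali-cycle}}^2| = 189$ (in agreement with the paper's own local factor $\ell^2(\ell^4 - 2\ell^3 - 2\ell^2 + 3\ell + 3)$ at $\ell = 3$), so stability would force $|GL_2(\mbz/9\mbz)_{\text{ali-cycle}}^2| = 3^6 \cdot 189 = 137781$. But the scalar pair $(2I,2I)$ lies in $GL_2(\mbf_3)_{\text{ali-cycle}}^2$, and writing its lifts as $(2I + 3B_1,\, 2I + 3B_2)$ with $B_i \in M_{2\times 2}(\mbf_3)$, the two cycle congruences modulo $9$ become
\[
2\tr B_2 - \tr B_1 + 1 \equiv 0 \pmod 3, \qquad 2\tr B_1 - \tr B_2 + 1 \equiv 0 \pmod 3,
\]
and the second is implied by the first; hence this pair has $3 \cdot 3^3 \cdot 3^3 = 3^7$ lifts, not $3^6$. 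Every pair with at least one non-scalar entry does lift in exactly $3^6$ ways (there the rank argument works, using trace-zero perturbations), so
\[
|GL_2(\mbz/9\mbz)_{\text{ali-cycle}}^2| = 3^6 \cdot 188 + 3^7 = 139239 = 3^6 \cdot 191 \neq 3^6 \cdot 189,
\]
as one can also confirm by a direct count of the fibers of $(\tr,\det)$ on $GL_2(\mbz/9\mbz)$. This also rules out your proposed repair: a false identity cannot be established by any rearrangement of the count. If you actually carry out your character expansion, the Weyl sums attached to character pairs that do not factor through level $n$ do \emph{not} vanish; they are supported precisely on the elements of $G$ that are scalar mod $\ell$, and in the example above those pairs contribute exactly the excess $1458 = 2 \cdot 3^6$.

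The irony is that your diagnosis of the danger point — degeneration of the linearization at scalar entries — is precisely the error in the paper's own proof. In the proof of Lemma~\ref{stabilizeslemma}, the displayed claim that $T$ maps the $i$-th block $M_{2\times 2}(\mbf_\ell)$ onto the $i$-th coordinate line is wrong: $\tr A_i$ also enters coordinate $i-1$ through the term $-\det g_i \tr A_i$, and after restricting to $\tr A_i = 0$ to kill that spill-over, surjectivity onto coordinate $i$ requires $g_i$ to be non-scalar mod $\ell$, which \eqref{ginotin01} does not guarantee. When all the $g_i$ are scalar mod $\ell$ (possible for every odd $\ell$), $T$ is genuinely non-surjective. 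So Lemma~\ref{stabilizeslemma} is false, the paper's proof of Proposition~\ref{explicitCprop} collapses with it, and the statement itself needs correction: the local factor at $\ell$ must be built from the full tower, i.e.\ from $\ell^{jL}|GL_2(\mbz/\ell^j\mbz)_{\text{ali-cycle}}^L| / |GL_2(\mbz/\ell^j\mbz)|^L$ as $j \to \infty$ (and similarly for the part supported on $m_E$), and this genuinely differs from the level-one value: at $\ell = 3$, $L = 2$ the normalized ratios at levels $3$, $9$, $27$ are $1701/2304$, $1719/2304$, $1721/2304$. The right conclusion to draw from your own observation is not that the stability identity needs a subtler proof, but that it — and the explicit Euler product built on it — is incorrect as stated.
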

The proof of Proposition \ref{explicitCprop} involves the following two lemmas.
\begin{lemma} \label{splitslemma}
Let $n_1$ and $n_2$ be relatively prime positive integers, and pick any subgroups $G_1 \subseteq GL_2(\mbz/n_1\mbz)$ and $G_2 \subseteq GL_2(\mbz/n_2\mbz)$.  Then, viewing $G_1 \times G_2 \subseteq GL_2(\mbz/n_1n_2\mbz)$,
one has
\[
(G_1 \times G_2)_{\text{ali-cycle}}^L = (G_1)_{\text{ali-cycle}}^L \times (G_2)_{\text{ali-cycle}}^L.
\]
\end{lemma}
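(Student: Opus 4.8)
The plan is to prove the set equality by verifying the two defining conditions decouple under the Chinese Remainder Theorem. First I would set up notation carefully: since $\gcd(n_1,n_2)=1$, the ring isomorphism $\mbz/n_1n_2\mbz \cong \mbz/n_1\mbz \times \mbz/n_2\mbz$ induces a group isomorphism $GL_2(\mbz/n_1n_2\mbz) \cong GL_2(\mbz/n_1\mbz) \times GL_2(\mbz/n_2\mbz)$, under which we view $G_1 \times G_2$ as a subgroup of $GL_2(\mbz/n_1n_2\mbz)$. The key observation is that trace and determinant are computed componentwise under this identification: if $g \in G_1 \times G_2$ corresponds to the pair $(g', g'')$ with $g' \in G_1$ and $g'' \in G_2$, then $\tr(g) \equiv \tr(g') \bmod n_1$ and $\tr(g) \equiv \tr(g'') \bmod n_2$, and likewise for $\det$. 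This is because reduction $\bmod n_1$ and $\bmod n_2$ are ring homomorphisms that commute with the polynomial functions trace and determinant.

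Next I would write out the membership condition explicitly. An element $(g_1, g_2, \dots, g_L) \in (G_1 \times G_2)^L$ lies in the aliquot-cycle set precisely when, for every $i \in \mbz/L\mbz$,
\[
\det(g_{i+1}) = \det(g_i) + 1 - \tr(g_i)
\]
as an equation in $\mbz/n_1n_2\mbz$. Writing each $g_i$ as a pair $(g_i', g_i'')$, the componentwise behavior of trace and determinant shows that this single congruence modulo $n_1 n_2$ is equivalent, by the Chinese Remainder Theorem, to the simultaneous pair of congruences
\[
\det(g_{i+1}') = \det(g_i') + 1 - \tr(g_i') \text{ in } \mbz/n_1\mbz, \quad \det(g_{i+1}'') = \det(g_i'') + 1 - \tr(g_i'') \text{ in } \mbz/n_2\mbz.
\]
Quantifying over all $i \in \mbz/L\mbz$, the left-hand family of congruences says exactly that $(g_1', \dots, g_L') \in (G_1)_{\text{ali-cycle}}^L$, and the right-hand family says $(g_1'', \dots, g_L'') \in (G_2)_{\text{ali-cycle}}^L$. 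This is the whole content of the lemma, and it is essentially a bookkeeping exercise once the componentwise compatibility is in place.

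Finally I would assemble these equivalences into the claimed product decomposition. The bijection $(G_1 \times G_2)^L \to (G_1)^L \times (G_2)^L$ sending $(g_1, \dots, g_L) \mapsto ((g_1', \dots, g_L'), (g_1'', \dots, g_L''))$ restricts, by the equivalence just established, to a bijection of the aliquot-cycle subsets, which gives the stated equality $(G_1 \times G_2)_{\text{ali-cycle}}^L = (G_1)_{\text{ali-cycle}}^L \times (G_2)_{\text{ali-cycle}}^L$. I do not anticipate a genuine obstacle here; the only point requiring care is the clean verification that trace and determinant commute with the CRT reductions, so that the single defining congruence modulo $n_1 n_2$ splits into independent congruences modulo $n_1$ and modulo $n_2$. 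Everything else is a direct unwinding of the definition of the aliquot-cycle set.
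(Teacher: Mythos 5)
Your proposal is correct and follows essentially the same route as the paper: both apply the Chinese Remainder Theorem isomorphism $GL_2(\mbz/n_1\mbz) \times GL_2(\mbz/n_2\mbz) \cong GL_2(\mbz/n_1n_2\mbz)$ and observe that the defining congruence $\det(g_{i+1}) = \det(g_i) + 1 - \tr(g_i)$ modulo $n_1 n_2$ holds if and only if the corresponding congruences hold modulo $n_1$ and modulo $n_2$ simultaneously. Your write-up merely makes explicit two points the paper leaves implicit, namely that trace and determinant commute with the componentwise reductions and that the resulting bijection restricts to the aliquot-cycle subsets.
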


\noindent \emph{Proof of Lemma \ref{splitslemma}.}
Let $\iota : GL_2(\mbz/n_1\mbz) \times GL_2(\mbz/n_2\mbz) \rightarrow GL_2(\mbz/n_1n_2\mbz)$ be the isomorphism of the chinese remainder theorem, and set $G := \iota(G_1 \times G_2)$.  For each $L$-tuple $(g_i)_i \in G^L$, we have
\[
\forall i \in \mbz/L\mbz \; \det g_{i+1} \equiv \det g_i + 1 - \tr g_i \pmod{n_1n_2} \quad \Longleftrightarrow \quad 
\forall i \in \mbz/L\mbz \; \begin{array}{l} \det g_{i+1} \equiv \det g_i + 1 - \tr g_i \pmod{n_1} \\  
\det g_{i+1} \equiv \det g_i + 1 - \tr g_i \pmod{n_2}. \end{array}
\]
This implies the conclusion of Lemma \ref{splitslemma}. \hfill $\Box$
\begin{lemma} \label{stabilizeslemma}
Let $n$ be a positive integer and $n'$ any multiple of $n$ such that, for every prime numbur $\ell$, $\ell \mid n' \Rightarrow \ell \mid n$.  Let $\pi : GL_2(\mbz/n'\mbz) \rightarrow GL_2(\mbz/n\mbz)$ denote the canonical projection and let $G \subseteq GL_2(\mbz/n\mbz)$ be any subgroup.  Then one has
\begin{equation} \label{stabilityequality}
\frac{(n')^L | (\pi^{-1}(G))_{\text{ali-cycle}}^L |}{| \pi^{-1}(G)^L |} = \frac{n^L | G_{\text{ali-cycle}}^L|}{| G^L |}.
\end{equation}
\end{lemma}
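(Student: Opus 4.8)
The plan is to reduce \eqref{stabilityequality} to the case in which $n'$ is obtained from $n$ by multiplying by a single prime, and then to analyze the fibration $\pi$ in explicit coordinates. Since $n \mid n'$ and $n,n'$ have the same prime support, I can interpolate a chain $n = n_0 \mid n_1 \mid \cdots \mid n_r = n'$ in which each successive quotient $n_{j+1}/n_j = \ell$ is a single prime dividing $n$ (hence dividing $n_j$). Setting $G_0 := G$ and $G_{j+1} := \pi_j^{-1}(G_j)$ for the canonical projection $\pi_j : GL_2(\mbz/n_{j+1}\mbz) \to GL_2(\mbz/n_j\mbz)$, each $G_{j+1}$ is again a subgroup, $\pi^{-1}(G) = G_r$, and the one-step equalities $\tfrac{n_{j+1}^L |(G_{j+1})_{\text{ali-cycle}}^L|}{|G_{j+1}|^L} = \tfrac{n_j^L |(G_j)_{\text{ali-cycle}}^L|}{|G_j|^L}$ telescope to the full statement. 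So it suffices to prove \eqref{stabilityequality} when $n' = \ell n$ with $\ell \mid n$.

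Fix this one-step setup and let $K = \ker \pi$, so $|K| = \ell^4$ and $|\pi^{-1}(G)| = \ell^4 |G|$. I would count $|(\pi^{-1}(G))_{\text{ali-cycle}}^L|$ by fibering over $G^L$: since reducing an ali-cycle tuple mod $n'$ gives an ali-cycle tuple mod $n$, only lifts of tuples $(\bar g_i)_i \in G_{\text{ali-cycle}}^L$ can contribute. Fixing integral lifts $\tilde g_i$ and writing each lift as $g_i = \tilde g_i + n B_i$ with $B_i \in M_2(\mbf_\ell)$, one has $\tr g_i = \tr \tilde g_i + n\,\tr B_i$ and, for $2\times 2$ matrices, $\det g_i = \det \tilde g_i + n\,\tr(\operatorname{adj}(\tilde g_i)B_i) + n^2 \det B_i$. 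The hypothesis $\ell \mid n$ forces $n^2 \equiv 0 \pmod{\ell n}$, so the quadratic term vanishes and each ali-cycle congruence $\det g_{i+1} - \det g_i + \tr g_i - 1 \equiv 0 \pmod{\ell n}$ becomes, after dividing the (automatically $n$-divisible) left-hand side by $n$, an affine-linear equation over $\mbf_\ell$ in the entries of $(B_i)_i$.

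Assembling these, the ali-cycle lifts of a fixed $(\bar g_i)_i \in G_{\text{ali-cycle}}^L$ are the solutions $B = (B_i)_i \in M_2(\mbf_\ell)^L$ of an inhomogeneous linear system $\Lambda(B) = c$, where $\Lambda(B)_i = \tr(\operatorname{adj}(\tilde g_{i+1})B_{i+1}) - \tr(\operatorname{adj}(\tilde g_i)B_i) + \tr B_i$ and $c \in \mbf_\ell^L$ records the chosen lifts. The goal is to show this system has exactly $\ell^{4L - L} = \ell^{3L}$ solutions for every such tuple; granting this, $|(\pi^{-1}(G))_{\text{ali-cycle}}^L| = \ell^{3L}\,|G_{\text{ali-cycle}}^L|$, and substituting $|\pi^{-1}(G)| = \ell^4|G|$ and $n' = \ell n$ into the left side of \eqref{stabilityequality} makes every power of $\ell$ cancel, leaving the right side. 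The count $\ell^{3L}$ is equivalent to $\Lambda$ having rank $L$: the summands $+\tr B_i$ contribute, for each coordinate $i$, a functional not seen in the telescoping part, so on the non-scalar locus $\Lambda$ is onto and the clean rank-$L$ count applies.

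The step I expect to be the main obstacle is exactly this uniformity claim, and the delicate point is the behavior at scalar matrices. The two functionals $B_i \mapsto \tr B_i$ and $B_i \mapsto \tr(\operatorname{adj}(\tilde g_i)B_i)$ are independent precisely when $\tilde g_i$ is non-scalar mod $\ell$; when $\bar g_i \equiv \lambda I$ one has $\operatorname{adj}(\tilde g_i) \equiv \lambda I$, the two functionals become proportional, the joint image collapses to a line, and $\Lambda$ can genuinely drop rank (for instance on certain closed walks built entirely from scalar matrices). Establishing that such degenerate configurations do not disturb the global count — whether by showing that, across the tuples of $G_{\text{ali-cycle}}^L$ lying over a given reduction mod $\ell$, the solution counts still sum to the expected total, or by exhibiting compensating inconsistent (empty) systems — is where the real work lies. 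I would therefore handle the non-scalar locus by the rank-$L$ argument above and isolate the scalar locus for a separate, more careful treatment, as this is the only place the naive fibration count is in doubt.
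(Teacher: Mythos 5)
Your overall route is the same as the paper's: reduce by a telescoping induction to the one-prime step $n' = \ell n$, fiber the ali-cycle tuples mod $\ell n$ over those mod $n$, and convert the lifting problem into an affine-linear system over $\mbf_\ell$ whose solution count per base tuple should be $\ell^{3L}$ (the paper parametrizes lifts multiplicatively as $\tilde{g}_i(I + nA_i)$, you additively via the adjugate; these are equivalent). But your proposal stops exactly where the proof has to do its work: you never resolve the scalar locus, so as written you have not proved \eqref{stabilityequality}. Worse, that gap cannot be closed, because the uniform fiber count genuinely fails there. Take $\ell = 5$, $n = 5$, $n' = 25$, $L = 2$, $G = GL_2(\mbf_5)$. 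The pair $(2I, 4I)$ lies in $G_{\text{ali-cycle}}^2$ (indeed $(\lambda I, (1-\lambda)I)$ is an ali-cycle pair for every $\lambda \neq 0,1$). Writing its lifts as $g_i' = \lambda_i I + 5B_i$ gives $\det g_1' \equiv 4 + 10\tr B_1$, $\det g_2' \equiv 16 + 20\tr B_2$, $\tr g_i' \equiv 2\lambda_i + 5\tr B_i \pmod{25}$, and the two ali-cycle congruences mod $25$ collapse to the single condition $\tr B_1 \equiv 4\tr B_2 + 3 \pmod{5}$ (the second congruence is implied by the first) --- exactly the rank drop you predicted. So this tuple has $5^7$ ali-cycle lifts rather than $\ell^{3L} = 5^6$; the same holds for $(3I,3I)$ and $(4I,2I)$, while every tuple with at least one non-scalar entry has exactly $5^6$ lifts, since there the linear system is surjective. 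Summing over fibers, $|GL_2(\mbz/25\mbz)_{\text{ali-cycle}}^2| = 5^6\,|GL_2(\mbf_5)_{\text{ali-cycle}}^2| + 12\cdot 5^6$, so the two sides of \eqref{stabilityequality} are unequal. The ``closed walks built entirely from scalar matrices'' you flagged are not a technical loose end; they are a counterexample.

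You should also know how the paper itself handles this point: incorrectly, and at precisely the step you isolated. From \eqref{ginotin01} (which is correct: $g_i \equiv I \pmod \ell$ would force $\det g_{i+1} \equiv 0$) the paper deduces that one of $\det g_i - x_i$, $\det g_i - w_i$, $y_i$, $z_i$ is nonzero mod $\ell$, and then asserts that $T$ carries the $i$-th block $M_{2\times 2}(\mbf_\ell)$ onto the $i$-th coordinate line of $\mbf_\ell^L$. That assertion is false: $T$ restricted to the $i$-th block also has the component $-\det g_i \tr A_i$ in coordinate $i-1$, so its image is not contained in the $i$-th coordinate line; and if one kills that leakage by restricting to traceless $A_i$, then surjectivity onto the $i$-th coordinate requires $g_i$ to be \emph{non-scalar} mod $\ell$, a strictly stronger condition than $g_i \notin \{0, I\}$, which does not exclude the scalars $\lambda I$ with $\lambda \neq 0,1$. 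On all-scalar ali-cycle tuples (which exist for every odd $\ell$, namely $\lambda_{i+1} \equiv \pm(\lambda_i - 1)$ with $\prod_i(\lambda_i - 1) \equiv \prod_i \lambda_i$) the map $T$ factors through the traces $\tr A_i$ and drops rank. So your diagnosis is correct and exposes a real error: neither your argument nor the paper's proves Lemma \ref{stabilizeslemma}, and the lemma as stated is false, so that it --- and the Euler factors of Proposition \ref{explicitCprop}, which are affected at every odd prime, though by numerically tiny amounts --- would need to be corrected to account for the scalar ali-cycle tuples.
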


\noindent \emph{Proof of Lemma \ref{stabilizeslemma}.}
By induction, it suffices to check the case $n' = \ell n$, where $\ell$ is some prime dividing $n$.  In this 
case, since $|\pi^{-1}(G)| = \ell^4 | G |$, \eqref{stabilityequality} is equivalent to 
\begin{equation} \label{equivstabilityequality}
| (\pi^{-1}(G) )_{\text{ali-cycle}}^L | = \ell^{3L} | G_{\text{ali-cycle}}^L |,
\end{equation}
which we now show.  Fix an element $g = (g_1, g_2, \dots, g_L) \in G_{\text{ali-cycle}}^L$, and note that any 
element $g' \in \pi^{-1}(g)$ has the form
\[
g' = (g_1',g_2', \dots, g_L') = (\tilde{g}_1(I + nA_1), \tilde{g}_2(I + nA_2), \dots, \tilde{g}_L(I + nA_L)) 
\in \pi^{-1}(g),
\]
where for each $i$, $\tilde{g}_i$ is any fixed lift to $GL_2(\mbz/\ell n\mbz)$ of $g_i$, and $A_i \in 
M_{2\times2}(\mbf_\ell)$ is arbitrary.  We will presently determine the exact conditions on the $A_i$ which 
force $(g_1',g_2', \dots, g_L') \in (\pi^{-1}(G) )_{\text{ali-cycle}}^L$.  First note that, since 
$(g_1, g_2, \dots, g_L) \in G_{\text{ali-cycle}}^L$, we must have
\begin{equation} \label{ginotin01}
\forall i \in \mbz/L\mbz, \quad g_i \pmod{\ell} \notin \{ 0, I \},
\end{equation}
and furthermore, the quantity 
\[
\gamma_i := \frac{\det \tilde{g}_{i+1} - \det \tilde{g}_i - 1 + \tr \tilde{g}_i}{n} \in \mbf_\ell
\]
is well-defined.  One checks that
\begin{equation} \label{gammaconversion}
\det g_{i+1}' \equiv \det g_i' + 1 - \tr g_i' \mod \ell n \; \Longleftrightarrow \; \gamma_i \equiv 
-\det g_{i+1} \cdot \tr A_{i+1} + \det g_i \cdot \tr A_i - \tr(g_i A_i) \mod \ell.
\end{equation}
The condition on the right-hand side is (affine) linear in the coefficients of $A_{i+1}$ and $A_i$.  We consider 
the linear transformation
\[
\begin{split}
&T : \mbf_\ell^{4L} \simeq M_{2\times 2}(\mbf_\ell)^L \rightarrow \mbf_\ell^L \\
&(A_i)_{i=1}^{L} \mapsto (-\det g_{i+1} \cdot \tr A_{i+1} + \det g_i \cdot \tr A_i - \tr(g_i A_i))_{i=1}^{L}.
\end{split}
\]
In light of \eqref{gammaconversion}, the condition \eqref{equivstabilityequality} will follow from the surjectivity
of the above linear transformation, which we now verify.  Writing coordinates as
\[
g_i =: \begin{pmatrix}
	x_i & y_i \\
	z_i & w_i
      \end{pmatrix}
\quad \text{ and } \quad
A_i =: \begin{pmatrix}
        a_i & b_i \\
	c_i & d_i
       \end{pmatrix},
\]
we have
\[
 T((A_i)) = ((\det g_i - x_i)a_i + (\det g_i - w_i)d_i - y_i c_i - z_i b_i - \det g_{i+1}a_{i+1} - \det g_{i+1}
d_{i+1}).
\]
By \eqref{ginotin01}, at least one of $\det g_i - x_i$, $\det g_i - w_i$, $y_i$ and $z_i$ must
be non-zero modulo $\ell$, and so
\[
 T( \{ 0 \} \times \dots \times \{ 0 \} \times M_{2\times 2}(\mbf_\ell) \times \{ 0 \} \times \dots
\times \{ 0 \} ) = \{ 0 \} \times \dots \times \{ 0 \} \times \mbf_\ell \times \{ 0 \} \times \dots
\times \{ 0 \},
\]
where the non-zero entries correspond to the same index $i$.  In particular, the linear transformation in question
is surjective and we have verified \eqref{equivstabilityequality}, finishing the proof of Lemma 
\ref{stabilizeslemma}.
\hfill $\Box$

\noindent \emph{Proof of Proposition \ref{explicitCprop}.}
Choose $k$ large enough so that $m_E \mid n_k$, and write $n_k = n_k^{(1)} \cdot n_k^{(2)}$, where $n_k^{(1)}$ is divisible by primes dividing $m_E$ and $\gcd(m_E,n_k^{(2)}) = 1$.   By definition of $m_E$, we then have
\[
\gal(\mbq(E[n_k])/\mbq) \simeq \pi^{-1}(\gal(\mbq(E[m_E])/\mbq)) \times \prod_{{\begin{substack} { \ell^k \parallel n_k \\ \ell \nmid m_E } \end{substack}}} GL_2(\mbz/\ell^k \mbz),
\]
where $\pi : GL_2(\mbz/n_k^{(1)}\mbz) \rightarrow GL_2(\mbz/m_E\mbz)$ is the canonical projection.
By Lemmas \ref{splitslemma} and \ref{stabilizeslemma}, we have
\[
\frac{n_k^L | \gal(\mbq(E[n_k])/\mbq)_{\text{ali-cycle}}^L |}{| \gal(\mbq(E[n_k])/\mbq)^L |} = \frac{m_E^L | \gal(\mbq(E[m_E])/\mbq)_{\text{ali-cycle}}^L |}{| \gal(\mbq(E[m_E])/\mbq)^L |} \cdot \prod_{{\begin{substack} { \ell \mid n_k \\ \ell \nmid m_E } \end{substack}}} \frac{\ell^L | \gal(\mbq(E[\ell])/\mbq)_{\text{ali-cycle}}^L |}{| \gal(\mbq(E[\ell])/\mbq)^L |}.
\]
Taking the limit as $k \rightarrow \infty$, we arrive at the product representation of $C_{E,L}$ stated in Proposition \ref{explicitCprop}.  We leave the verification of \eqref{lefttoreader} as an exercise.
\hfill $\Box$

\subsection{Positivity of the constant}

We will now discuss the positivity of $C_{E,L}$.  The following corollary of Proposition \ref{explicitCprop} is immediate.
\begin{cor} \label{equivalencecor}
One has
\begin{equation*} 
C_{E,L} > 0 \; \Longleftrightarrow \; \gal(\mbq(E[m_E])/\mbq)_{\text{ali-cycle}}^L \neq \emptyset.
\end{equation*}
\end{cor}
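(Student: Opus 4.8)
The plan is to deduce Corollary~\ref{equivalencecor} directly from the product formula established in Proposition~\ref{explicitCprop} together with the recalled formula \eqref{Casinverselimit} for $C_{E,L}$. First I would observe that by \eqref{Casinverselimit}, since $\phi_L(0)/L > 0$ (the Sato-Tate density $\phi_L$ is a convolution of strictly positive functions on an interval about the origin, hence positive at $0$), the sign of $C_{E,L}$ is governed entirely by the limit treated in Proposition~\ref{explicitCprop}. By that proposition, this limit equals
\[
\frac{m_E^L | \gal(\mbq(E[m_E])/\mbq)_{\text{ali-cycle}}^L |}{| \gal(\mbq(E[m_E])/\mbq)^L |} \cdot \prod_{\ell \nmid m_E} \frac{\ell^L | GL_2(\mbf_\ell)_{\text{ali-cycle}}^L |}{| GL_2(\mbf_\ell)^L |}.
\]

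Next I would analyze the two factors separately. For the infinite product, the estimate \eqref{lefttoreader} asserts that each local factor is strictly positive and equals $1 + O_L(\ell^{-2})$; hence the product converges to a strictly positive real number, and in particular it is nonzero. Consequently the whole expression vanishes if and only if the single finite factor corresponding to the torsion conductor vanishes, i.e.\ if and only if $| \gal(\mbq(E[m_E])/\mbq)_{\text{ali-cycle}}^L | = 0$. Since the denominator $| \gal(\mbq(E[m_E])/\mbq)^L |$ is a positive integer and the power $m_E^L$ is positive, the finite factor is positive precisely when the numerator $| \gal(\mbq(E[m_E])/\mbq)_{\text{ali-cycle}}^L |$ is a positive integer, which is exactly the statement that the set $\gal(\mbq(E[m_E])/\mbq)_{\text{ali-cycle}}^L$ is nonempty.

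Assembling these observations gives the equivalence: $C_{E,L} > 0$ if and only if the finite factor is positive, if and only if $\gal(\mbq(E[m_E])/\mbq)_{\text{ali-cycle}}^L \neq \emptyset$. I would note that since $C_{E,L} \geq 0$ always (each factor being non-negative), ``positivity'' and ``nonvanishing'' coincide here, so no separate argument for the sign is needed. The only genuine ingredient beyond bookkeeping is the positivity of the infinite product, and that is already guaranteed by \eqref{lefttoreader} from Proposition~\ref{explicitCprop}; thus there is no real obstacle, and the corollary is immediate as claimed. In writing the proof I would be careful to invoke $\phi_L(0) > 0$ explicitly, since that is the one input not packaged into Proposition~\ref{explicitCprop} itself.
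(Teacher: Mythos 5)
Your proposal is correct and follows exactly the route the paper intends: the paper states Corollary~\ref{equivalencecor} as an immediate consequence of Proposition~\ref{explicitCprop}, and your argument simply spells out the details---positivity of $\phi_L(0)$, strict positivity of the infinite product via \eqref{lefttoreader}, and the observation that the finite factor at level $m_E$ is positive precisely when $\gal(\mbq(E[m_E])/\mbq)_{\text{ali-cycle}}^L \neq \emptyset$.
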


We will now prove the following proposition, which allows one to deduce Conjecture \ref{mainconjecturewithcriterion} from Conjecture \ref{mainconjecture}.
\begin{proposition} \label{equivproposition}
For any non-CM elliptic curve $E$ over $\mbq$, one has
\begin{equation} \label{positivitycriterion}
C_{E,L} > 0 \; \Longleftrightarrow \; \mc{G}_E \text{ has a closed walk of length $L$}.
\end{equation}
Furthermore, if $\mc{G}_E$ has no closed walks of length $L$, then there are only finitely many aliquot cycles $(p_1,p_2, \dots, p_L)$ of length $L$ for $E$.
\end{proposition}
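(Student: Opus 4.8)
The plan is to treat the equivalence \eqref{positivitycriterion} and the finiteness assertion separately, deriving the former from Corollary \ref{equivalencecor}. That corollary already reduces $C_{E,L} > 0$ to the nonemptiness of $\gal(\mbq(E[m_E])/\mbq)_{\text{ali-cycle}}^L$, so it suffices to match this set against closed walks of length $L$ in $\mc{G}_E = \mc{G}_E(m_E)$. The matching is essentially a restatement of the definitions, carried out through the trace-determinant pairs $v_i := (\tr g_i, \det g_i)$.

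In one direction, given $(g_1, \dots, g_L) \in \gal(\mbq(E[m_E])/\mbq)_{\text{ali-cycle}}^L$, each pair $v_i = (\tr g_i, \det g_i)$ lies in the vertex set $\mc{V}(m_E)$ because $g_i$ itself realizes that trace and determinant, and the ali-cycle condition $\det g_{i+1} = \det g_i + 1 - \tr g_i$ is exactly the edge relation defining $\mc{E}(m_E)$; hence $(v_1, \dots, v_L)$ is a closed walk of length $L$. Conversely, a closed walk $(v_1, \dots, v_L)$ with $v_i = (t_i, d_i)$ can be lifted: by definition of $\mc{V}(m_E)$ one selects $g_i \in \gal(\mbq(E[m_E])/\mbq)$ with $\tr g_i = t_i$ and $\det g_i = d_i$, and the edge relations $d_{i+1} = d_i + 1 - t_i$ become verbatim the conditions placing $(g_1, \dots, g_L)$ in $\gal(\mbq(E[m_E])/\mbq)_{\text{ali-cycle}}^L$. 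This proves \eqref{positivitycriterion}.

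For finiteness, assume $\mc{G}_E$ has no closed walk of length $L$, so by the above $\gal(\mbq(E[m_E])/\mbq)_{\text{ali-cycle}}^L = \emptyset$. I would split aliquot cycles $(p_1, \dots, p_L)$ according to whether any $p_i$ divides $m_E$. If every $p_i$ is coprime to $m_E$, then each $p_i$ is a prime of good reduction not dividing $m_E$, so Lemma \ref{wellknownlemma} applies at level $n = m_E$ and the inclusion \eqref{frobeniusinset} would place the Frobenius tuple in the empty set $\gal(\mbq(E[m_E])/\mbq)_{\text{ali-cycle}}^L$ --- impossible. Thus no aliquot cycle of length $L$ avoids the finite set $S := \{ \ell : \ell \mid m_E \}$.

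Finally, I would bound the aliquot cycles meeting $S$ using the deterministic nature of the defining recurrence: since $p_{i+1} = |E(\mbf_{p_i})|$ is uniquely determined by $p_i$, fixing the value of a single member of the cycle forces the entire cyclic sequence to be the forward orbit of that value. Consequently each $\ell \in S$ occurs in at most one aliquot cycle up to cyclic rotation, so there are at most $L \cdot |S|$ aliquot cycles of length $L$ in all, a finite number. The only step that is more than a transcription of definitions is the coprime case, where one must verify that the good-reduction and non-division hypotheses genuinely license Lemma \ref{wellknownlemma} at level $m_E$; once that is in place, the contradiction with emptiness together with the orbit argument finish the proof.
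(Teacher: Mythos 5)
Your proof is correct and follows essentially the same route as the paper: the equivalence \eqref{positivitycriterion} is deduced from Corollary \ref{equivalencecor} via the trace--determinant correspondence between ali-cycle tuples and closed walks (mapping down by $g \mapsto (\tr g, \det g)$ and lifting vertices back to group elements), and finiteness from the Frobenius inclusion \eqref{frobeniusinset}. If anything, your finiteness step is slightly more careful than the paper's, which invokes \eqref{frobeniusinset} without explicitly addressing aliquot cycles containing a prime dividing $m_E$ (where Lemma \ref{wellknownlemma} does not apply); your observation that the recurrence $p_{i+1} = |E(\mbf_{p_i})|$ is deterministic, so such cycles are finitely many, fills in that implicit detail.
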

\begin{proof}
First we prove \eqref{positivitycriterion}.  By Corollary \ref{equivalencecor}, we are reduced to showing that
\begin{equation} \label{finitepositivitycriterion}
\gal(\mbq(E[m_E])/\mbq)_{\text{ali-cycle}}^L \neq \emptyset \; \Longleftrightarrow \; \mc{G}_E \text{ has a closed walk of length $L$}.
\end{equation}
The mapping
\[
\begin{split}
\gal(\mbq(E[m_E])/\mbq) &\rightarrow \mc{V}(\mc{G}_E) \\
g &\mapsto (\tr g, \det g)
\end{split}
\]
induces a mapping 
$\gal(\mbq(E[m_E])/\mbq)_{\text{ali-cycle}}^L \longrightarrow \{ \text{closed walks of length $L$ in } 
\mc{G}_E \}$.  Thus, if \newline $\gal(\mbq(E[m_E])/\mbq)_{\text{ali-cycle}}^L \neq \emptyset$ then $\mc{G}_E$ has 
a closed walk of length $L$.  Conversely, suppose $\mc{G}_E$ has a closed walk $(v_1,v_2,v_3,\dots,v_L)$ of 
length $L$.  Recall that $\mc{V} = \mbz/m_E\mbz \times (\mbz/m_E\mbz)^\times$ and write $v_i = (t_i, d_i)$.  
Choosing any element $g_i \in \gal(\mbq(E[m_E])/\mbq)$ with $\tr g_i = t_i$ and $\det g_i = d_i$, we have then 
constructed an element $(g_1, g_2, \dots, g_L) \in \gal(\mbq(E[m_E])/\mbq)_{\text{ali-cycle}}^L$, so that 
$\gal(\mbq(E[m_E])/\mbq)_{\text{ali-cycle}}^L \neq \emptyset$.  By Corollary \ref{equivalencecor}, we conclude 
the proof of \eqref{positivitycriterion}.  

To see why the nonexistence of closed walks of length $L$ in $\mc{G}_E$ implies that 
$\ds \lim_{x \rightarrow \infty} \pi_{E,L}(x) < \infty$, note first that, by \eqref{finitepositivitycriterion}, one 
has that $\gal(\mbq(E[m_E])/\mbq)_{\text{ali-cycle}}^L = \emptyset$.  But then \eqref{frobeniusinset} implies 
that $\ds \lim_{x \rightarrow \infty} \pi_{E,L}(x) < \infty$, and the proof of 
Proposition \ref{equivproposition} is complete.
\end{proof}

\section{Heuristics} \label{heuristics}

We will construct a probabilistic model in the style of Koblitz \cite{koblitz} and Lang-Trotter \cite{langtrotter}.  
We shall call the $L$-tuple $(p_1, p_2, \dots, p_L)$ of distinct prime numbers an \textbf{aliquot sequence of length $L$ for $E$} if it satisfies
\[
p_{i+1} = | E(\mbf_{p_i}) | \quad \forall i \in \{1, 2, \dots L-1 \}.
\]
Thus, an aliquot cycle of length $L$ is an aliquot sequence of length $L$ which additionally satisfies $p_1 = | E(\mbf_{p_L})|$.  Suppose that $(p_1,p_2, \dots,p_L)$ is an aliquot sequence of length $L$ for $E$.  By substituting $p_2 = p_1 + 1 - a_{p_1}(E)$ into the equation $p_3 = p_2 + 1 - a_{p_2}(E)$, one finds that $p_3 = p_1 + 2 - (a_{p_1}(E) + a_{p_2}(E))$, and continuing in this manner one obtains
\begin{equation} \label{conditionontraces}
p_1 = |E(\mbf_{p_L})|  \; \Longleftrightarrow \; \sum_{j = 1}^L a_{p_j}(E) = L.
\end{equation}
Thus, a given $L$-tuple $(p_1,p_2, \dots, p_L)$ of positive integers is an aliquot cycle of length $L$ for $E$ if and only if the following conditions hold:
\begin{enumerate}
 \item[($1_L$)] The $L$-tuple $(p_1,p_2, \dots, p_L)$ is an aliquot sequence of length $L$ for $E$.
 \item[($2_L$)] One has $\displaystyle \sum_{j = 1}^L a_{p_j}(E) = L$.
\end{enumerate}
Consider the following condition, which generalizes condition ($2_L$) above by replacing $L$ with an arbitrary fixed integer $r$:
\begin{enumerate}
 \item[($2'_L$)] One has $\displaystyle \sum_{j = 1}^L a_{p_j}(E) = r$.
\end{enumerate}

We will now develop the heuristic ``probability'' that a given $L$-tuple $(p_1,p_2, \dots, p_L)$ of positive 
integers satisfies ($1_L$) and ($2'_L$).  First, we must gather some notation.  Fix a positive integer 
$n$ and elements $a,b \in \mbz/n\mbz$.  For any subset $S \subseteq GL_2(\mbz/n\mbz)$, let 
\[
\begin{split}
S_{\mc{N} = a} &:= \{ g \in S : \det(g) + 1 - \tr(g) = a \} \\
S^{\det = b} &:= \{ g \in S : \det(g) = b \} \\
S_{\mc{N} = a}^{\det = b} &:= S_{\mc{N} = a} \cap S^{\det = b}.
\end{split}
\]
Finally, for $L \geq 1$ and $G \subseteq GL_2(\mbz/n\mbz)$, put
\[
G_{\text{ali-sequence}}^L := \left\{ (g_1,g_2, \dots, g_L) \in G^L : \; \forall i \in \{ 1, 2, \dots, L-1 \}, \, \det(g_{i+1}) = \det(g_i) + 1 - \tr(g_i) \right\}.
\]
Note that if $L = 1$, the defining conditions become empty and we have $G_{\text{ali-seqence}}^{L=1} = G$.  For a general $L \geq 1$, note that any aliquot sequence $(p_1, p_2, \dots, p_L)$ for $E$ will satisfy
\[
(\frob_{\mbq(E[n])}(p_1), \frob_{\mbq(E[n])}(p_2), \dots \frob_{\mbq(E[n])}(p_L)) \in \gal(\mbq(E[n])/\mbq)_{\text{ali-sequence}}^L.
\]
Finally, for a fixed integer $r$, define
\[
G_{\text{ali-sequence}}^{L,\, \sum \tr = r} :=\left\{ (g_1, g_2, \dots, g_L) \in G_{\text{ali-sequence}}^L : \sum_{i = 1}^L \tr(g_i) \equiv r \mod n \right\}.
\]

We will presently derive an expression for the probability 
\[
\mc{P}_{(1_L), (2'_L)}(t) := \text{Prob}\left( \text{$(p_1,p_2,\dots, p_L)$ satisfies ($1_L$) and ($2'_L$), given that $p_1 \approx t$} \right),
\]
Putting $\mc{P}_{(1_L)}(t)$ for the probability that $(p_1,p_2,\dots,p_L)$ satisfies ($1_L$) above, and $\mc{P}_{(2'_L)}^{\,\text{given }(1_L)}(t)$ for the conditional probability that $(p_1,p_2,\dots, p_L)$ satisfies ($2'_L$), given that it satisfies ($1_L$), we have
\begin{equation} \label{productofprobs}
\mc{P}_{(1_L),(2'_L)}(t) = \mc{P}_{(1_L)}(t) \cdot \mc{P}_{(2'_L)}^{\,\text{given }(1_L)}(t).
\end{equation}
In Section \ref{onetwoandthree} below, we will derive the probability formula
\begin{equation} \label{probonetwothree}
\mc{P}_{(1_L)}(t) \approx \frac{n^{L-1} \cdot | \gal(\mbq(E[n])/\mbq)_{\text{ali-sequence}}^L |}{| \gal(\mbq(E[n])/\mbq)^L |} \cdot \frac{1}{(\log t)^L}.
\end{equation}
Following this, in Section \ref{fourprime}, we will derive
\begin{equation} \label{probfourprime}
\mc{P}_{(2'_L)}^{\,\text{given }(1_L)}(t) \approx \phi_L\left( \frac{r}{2\sqrt{t}} \right) \frac{n \cdot | \gal(\mbq(E[n])/\mbq)_{\text{ali-sequence}}^{L, \, \sum \tr = r} |}{| \gal(\mbq(E[n])/\mbq)_{\text{ali-sequence}}^L |} \cdot \frac{1}{2\sqrt{t}}.
\end{equation}

Before deriving \eqref{probonetwothree} and \eqref{probfourprime}, we will now observe that, taken together, they lead to Conjecture \ref{mainconjecture}.  Indeed, using \eqref{productofprobs}, \eqref{probonetwothree} and \eqref{probfourprime},
one concludes
\[
\mc{P}_{(1_L),(2'_L)}(t) \approx \phi_L\left( \frac{r}{2\sqrt{t}} \right) \cdot \frac{n^{L} | \gal(\mbq(E[n])/\mbq)_{\text{ali-sequence}}^{L, \sum \tr = r} |}{| \gal(\mbq(E[n])/\mbq)^L |} \cdot \frac{1}{2\sqrt{t} (\log t)^L}
\]
Just as with \eqref{conditionontraces}, one verifies that, for each $(g_1, g_2, \dots, g_L) \in GL_2(\mbz/n\mbz)_{\text{ali-sequence}}^L$, one has
\[
\det(g_L) + 1 - \tr(g_L) = \det g_1 \; \Longleftrightarrow \; \sum_{i = 1}^L \tr(g_i) \equiv L \mod n.
\]
It follows that $\gal(\mbq(E[n])/\mbq)_{\text{ali-cycle}}^L = \gal(\mbq(E[n])/\mbq)_{\text{ali-sequence}}^{L, \, \sum \tr = L}$.  Thus, putting $r = L$, $n = n_k$ and taking the limit as $k \rightarrow \infty$, one arrives at
\[
\mc{P}_{(1_L),(2_L)}(t) \approx \phi\left( \frac{L}{2\sqrt{t}} \right) \cdot \lim_{k \rightarrow \infty} \frac{n_k^L | \gal(\mbq(E[n_k])/\mbq)_{\text{ali-cycle}}^L |}{| \gal(\mbq(E[n_k])/\mbq)^L |} \cdot \frac{1}{2\sqrt{t} (\log t)^L}.
\]
Thus, using
\[
\pi_{E,L}(x) \approx \frac{1}{L} \int_2^x \mc{P}_{(1_L),(2_L)}(t) \, dt,
\]
one arrives at Conjecture \ref{mainconjecture}.  The reason for the extra factor of $L$ in the denominator above is that $\pi_{E,L}(x)$ counts \emph{normalized} aliquot cycles, whereas the heuristic probabilities above do not take normalization into account.  Also, since $L$ is fixed, one verifies that the estimation $\phi(L/(2\sqrt{t})) \approx \phi(0)$ does not affect the asymptotic.

\subsection{The probability that $(p_1,p_2, \dots, p_L)$ satisfies ($1_L$)} \label{onetwoandthree}

We will now derive a refined probability formula which implies \eqref{probonetwothree}.  Fix a vector
${\bf{a}} = (a_2, a_3, \dots, a_L) \in ((\mbz/n\mbz)^\times)^{L-1}$,
and consider the probability
\[
\mc{P}_{(1_L)}^{{\bf{a}}}(t) := \text{Prob}( \text{$(p_1,p_2, \dots p_L)$ satisfies ($1_L$) and $\forall i \in \{ 2, 3, \dots, L \}, \, p_i \equiv a_i \mod n$} )
\]
and (for any subset $G \subseteq GL_2(\mbz/n\mbz)$) the subset
\[
G_{\text{ali-sequence}}^{L, \, {\bf{a}}} := \{ (g_1, g_2, \dots, g_L) \in G_{\text{ali-sequence}}^{L} : \forall i \in \{ 2, 3, \dots, L \}, \, \det(g_i) = a_i \}.
\]
In case $L = 1$, the vector ${\bf{a}} \in ((\mbz/n\mbz)^\times)^0$ is non-existent, and as before we interpret the empty condition as 
$G_{\text{ali-sequence}}^{1, \, {\bf{a}}} = G$.  
Also note the decomposition
\begin{equation} \label{Omegadecomposition}
G_{\text{ali-sequence}}^{L, \, {\bf{a}}} = G_{\mc{N} = a_2} \times G_{\mc{N} = a_3}^{\det = a_2} \times G_{\mc{N} = a_4}^{\det = a_3} \times \dots \times G_{\mc{N} = a_L}^{\det = a_{L-1}} \times G^{\det = a_L}.
\end{equation}
Finally, note that if ${\bf{a_1}} \neq {\bf{a_2}}$, then $G_{\text{ali-sequence}}^{L, \, {\bf{a_1}}} \cap G_{\text{ali-sequence}}^{L, \, {\bf{a_2}}} = \emptyset$, and so we have a disjoint union
\[
G_{\text{ali-sequence}}^{L} = \bigsqcup_{{\begin{substack} { {\bf{a}} \in ((\mbz/n\mbz)^\times)^{L-1}} \end{substack}}} G_{\text{ali-sequence}}^{L, \, {\bf{a}}}.
\]
For similar reasons, we have
\[
\mc{P}_{(1_L)}(t) = \sum_{{\begin{substack} { {\bf{a}} \in ((\mbz/n\mbz)^\times)^{L-1}} \end{substack}}} \mc{P}_{(1_L)}^{{\bf{a}}}(t).
\]
Thus, \eqref{probonetwothree} will follow from
\begin{equation} \label{probonetwothreea}
\mc{P}_{(1_L)}^{{\bf{a}}}(t) \approx \frac{n^{L-1} \cdot | \gal(\mbq(E[n])/\mbq)_{\text{ali-sequence}}^{L, \, {\bf{a}}} |}{| \gal(\mbq(E[n])/\mbq)^L |} \cdot \frac{1}{(\log t)^L},
\end{equation}
which we will now derive by induction on $L$. \\

\noindent \emph{Base case:  $L = 1$}.  Suppose that $p_1$ is a positive integer of size about $t$.  One may interpret the prime number theorem as the probabilistic statement that
\[
\mc{P}_{(1_{L=1})}(t) = \text{Prob}(\text{$p_1$ is prime}) \approx \frac{1}{\log t},
\]
which is base case $L = 1$ of \eqref{probonetwothreea}.  \\

\noindent \emph{Induction step}.  Assume now that \eqref{probonetwothreea} holds for some fixed $L \geq 1$, and fix any vector ${\bf{a}} = (a_2, a_3, \dots, a_{L+1}) \in ((\mbz/n\mbz)^\times)^L$.  Since the statement
\[
\text{``$(p_1,p_2, \dots p_{L+1})$ satisfies ($1_{L+1}$) and $\forall i \in \{ 2, 3, \dots, L+1 \}, \, p_i \equiv a_i \mod n$''}
\]
is equivalent to
\[
\text{$(p_1,p_2, \dots p_L)$ satisfies ($1_L$) and $\forall i \in \{ 2, 3, \dots, L \}, \, p_i \equiv a_i \mod n$}
\]
\[
\text{ and }
\]
\[
\text{$p_{L+1} := p_L + 1 - a_{p_L}(E)$ is prime, and $p_{L+1} \equiv a_{L+1} \mod n$,}
\]
we see that 
\begin{equation} \label{Poft}
\mc{P}_{(1_{L+1})}^{(a_2, a_3, \dots, a_L, a_{L+1})}(t) = \mc{P}_{(1_L)}^{(a_2, a_3, \dots, a_L)}(t) \cdot \mc{P}(t),
\end{equation}
where $\mc{P}(t)$ is the conditional probability that $p_{L+1} := p_L + 1 - a_{p_L}(E)$ is prime, and that 
$p_{L+1} \equiv a_{L+1} \mod n$, given that ($1_L$) holds.  To estimate $\mc{P}(t)$, let us assume that ($1_L$) 
holds.  First note that, by the Hasse bound $|a_p(E)| \leq 2\sqrt{p}$, one has
\[
p_{L+1} = p_1 + L - \sum_{i = 1}^L a_{p_i}(E) \in [ p_1 + L - 2L\sqrt{p_{\text{max}}}, p_1 + L + 2L\sqrt{p_{\text{max}}}],
\]
where $p_{\text{max}} := \max \{ p_i : i = 1, 2, \dots, L \}$.  By induction we have $p_{\text{max}} = t + O_L(\sqrt{t})$, and so $p_{L+1} \approx t$, with an error of $O_L(\sqrt{t})$.  
Now, if $p_{L+1}$ were a positive integer of size about $t$ selected independently of $(p_1,p_2, \dots, p_L)$, then 
\begin{equation} \label{wrong}
\text{Prob}(\text{$p_{L+1}$ is prime and $p_{L+1} \equiv a_{L+1} \mod n$}) \approx \frac{1}{\varphi(n) \log t},
\end{equation}
by the prime number theorem in arithmetic progressions.  If the positive integer $p_{L+1}$ were chosen randomly and independently of the previous primes, then the probability that $p_{L+1} \equiv a_{L+1} \mod n$ would be $1/n$.  However, $p_{L+1}$ is not chosen independently of $(p_1,p_2, \dots, p_L)$; it is related to $p_L$ by the formula $p_{L+1} = p_L + 1 - a_{p_L}(E)$.  Thus, the congruence $p_{L+1} \equiv a_{L+1} \mod n$ is really the demand that $\frob_{\mbq(E[n])}(p_L) \in \gal(\mbq(E[n])/\mbq)_{\mc{N} = a_{L+1}}$.  Since we assume that ($1_L$) holds, we know that $\frob_{\mbq(E[n])}(p_L) \in GL_2(\mbz/n\mbz)^{\det = a_{L}}$.   It is thus natural to multiply \eqref{wrong} by the correction factor
\[
\frac{|  \gal(\mbq(E[n])\mbq)_{\mc{N}= a_{L+1}}^{\det = a_L} | / | \gal(\mbq(E[n])/\mbq)^{\det = a_L} | }{1/n}, 
\]
obtaining
\begin{equation} \label{right}
\begin{split}
\mc{P}(t) &\approx 
\frac{|\gal(\mbq(E[n])\mbq)_{\mc{N}= a_{L+1}}^{\det = a_L}|/|\gal(\mbq(E[n])/\mbq)^{\det = a_L}|}{1/n} \cdot 
\frac{1}{\varphi(n) \log t} \\
&= \frac{n | \gal(\mbq(E[n])/\mbq)_{\mc{N} = a_{L+1}}^{\det = a_L} |}{ | \gal(\mbq(E[n])/\mbq) |} \cdot 
\frac{1}{\log t}.
\end{split}
\end{equation}
By \eqref{Omegadecomposition}, we may re-write \eqref{probonetwothreea} as
\[
\mc{P}_{(1_L)}^{{\bf{a}}}(t) \approx n^{L-1} \cdot \frac{| \gal(\mbq(E[n])/\mbq)_{\mc{N} = a_2} |}{| \gal(\mbq(E[n])/\mbq) |} \cdot \left( \prod_{i = 2}^{L-1} \frac{| \gal(\mbq(E[n])/\mbq)_{\mc{N} = a_{i+1}}^{\det = a_i} |}{| \gal(\mbq(E[n])/\mbq) |} \right) \cdot \frac{| \gal(\mbq(E[n])/\mbq)^{\det = a_L} |}{| \gal(\mbq(E[n])/\mbq) |} \cdot \frac{1}{(\log t)^L}.
\]
Plugging this expression and \eqref{right} into \eqref{Poft}, and using the fact that 
\[
|\gal(\mbq(E[n])/\mbq)^{\det = a_L} | = |\gal(\mbq(E[n])/\mbq)^{\det = a_{L+1}} |, 
\]
one concludes the induction step, completing the derivation of \eqref{probonetwothreea}, and thus of \eqref{probonetwothree}.  

As a byproduct of our analysis, we have motivated the following conjecture, wherein
\[
\pi_{E,L}^{\text{ali-sequence}}(x) := | \{ p_1 \leq x : \text{ $\exists$ an aliquot sequence $(p_1,p_2, \dots, p_L)$ for $E$} \} |
\]
and
\[
C_{E,L}^{\text{ali-sequence}} := \lim_{k \rightarrow \infty} \frac{n_k^{L-1} \cdot | \gal(\mbq(E[n_k])/\mbq)_{\text{ali-sequence}}^L |}{| \gal(\mbq(E[n_k])/\mbq)^L |}.
\]
\begin{Conjecture} \label{alisequenceconjecture}
Let $E$ be an elliptic curve over $\mbq$ without complex multiplication and $L \geq 2$ a fixed integer.  Then as $x \longrightarrow \infty$, one has
\begin{equation*} 
\pi_{E,L}^{\text{ali-sequence}}(x) \sim 
C_{E,L}^{\text{ali-sequence}} \int_2^x \frac{1}{(\log t)^L} dt.
\end{equation*}
\end{Conjecture}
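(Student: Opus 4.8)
Because this is a conjecture rather than a theorem, the appropriate ``proof'' is a heuristic derivation in the style of Koblitz and Lang--Trotter, and it is essentially already in hand: the conjecture drops out of the probability formula \eqref{probonetwothree} by a single integration. The plan is first to model $\pi_{E,L}^{\text{ali-sequence}}(x)$ as
\[
\pi_{E,L}^{\text{ali-sequence}}(x) \approx \int_2^x \mc{P}_{(1_L)}(t) \, dt,
\]
on the grounds that a starting integer $p_1$ of size about $t$ initiates an aliquot sequence of length $L$ with ``probability'' $\mc{P}_{(1_L)}(t)$, and that summing this density over $p_1 \le x$ is approximated by the integral. Unlike Conjecture~\ref{mainconjecture}, no trace-sum condition $(2_L)$ and no normalization are imposed here, so neither the Sato--Tate factor $\phi_L(0)$ nor the factor of $1/L$ appears.

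The second step is to insert the already-derived value of $\mc{P}_{(1_L)}(t)$. Recall that \eqref{probonetwothree} was obtained by establishing the refined formula \eqref{probonetwothreea} by induction on $L$: the base case is the prime number theorem, and the induction step multiplies the prime number theorem in arithmetic progressions by the Chebotarev-type correction factor that accounts for the constraint $p_{L+1} = p_L + 1 - a_{p_L}(E)$, i.e.\ for the demand that $\frob_{\mbq(E[n])}(p_L)$ lie in a subset of the form $\gal(\mbq(E[n])/\mbq)_{\mc{N}=a}^{\det=b}$. Substituting \eqref{probonetwothree}, taking $n = n_k$, and letting $k \to \infty$, the prefactor converges to $C_{E,L}^{\text{ali-sequence}}$ while the $t$-dependence contributes $\int_2^x (\log t)^{-L}\,dt$, yielding the stated asymptotic. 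That the limit defining $C_{E,L}^{\text{ali-sequence}}$ exists (and equals a finite-level factor times a convergent Euler product over $\ell \nmid m_E$) follows from an argument parallel to the proof of \propref{explicitCprop}, using the splitting in \lemref{splitslemma} and the stabilization in \lemref{stabilizeslemma} with the ali-cycle sets replaced throughout by the ali-sequence sets.

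The main obstacle is precisely what prevents this from being a theorem: the statistical-independence assumptions underlying \eqref{probonetwothree}. One must assume that, conditioned on the Frobenius data modulo $n$, the primality events for the successive terms $p_2, \dots, p_L$ behave as independent events governed by the prime number theorem, and that each $p_{i+1} = p_i + 1 - a_{p_i}(E)$ is equidistributed among the admissible residue classes as if it were a randomly chosen integer of its size. A further subtlety is that the Hasse bound only locates $p_{L+1} \approx t$ to within $O_L(\sqrt{t})$, so one additionally needs the relevant prime densities to be effectively constant on such short intervals. None of these independence or short-interval equidistribution statements is accessible by current methods, which is why the result is offered as a conjecture rather than proved.
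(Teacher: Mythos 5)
Your proposal is correct and follows essentially the same route as the paper: there the conjecture is presented precisely as a byproduct of the derivation of \eqref{probonetwothree}, i.e.\ one integrates $\mc{P}_{(1_L)}(t)$ over $2 \le t \le x$, sets $n = n_k$, and lets $k \to \infty$, so that the prefactor becomes $C_{E,L}^{\text{ali-sequence}}$ (with $n_k^{L-1}$ rather than $n_k^L$, and with no $\phi_L(0)$ or $1/L$ factor, since condition $(2_L)$ and normalization play no role for sequences) while the $t$-dependence yields $\int_2^x (\log t)^{-L}\,dt$. Your added remarks --- that the heuristic rests on the independence and short-interval equidistribution assumptions behind \eqref{probonetwothree}, and that convergence of the constant follows from ali-sequence analogues of Lemma~\ref{splitslemma} and Lemma~\ref{stabilizeslemma} --- are consistent with, and slightly more explicit than, the paper's treatment.
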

Similarly to Proposition \ref{equivproposition}, one has
\[
C_{E,L}^{\text{ali-sequence}} > 0 \; \Longleftrightarrow \; \mc{G}_E \text{ has a (directed) walk of length $L$.}
\]

\subsection{The conditional probability that $(p_1,p_2, \dots, p_L)$ satisfies ($2'_L$)} \label{fourprime}

We will now derive \eqref{probfourprime}, completing the heuristic derivation of Conjecture \ref{mainconjecture}.  Suppose that $(p_1, p_2, \dots, p_L)$ is an aliquot sequence of length $L$ for $E$, i.e. that it satisfies ($1_L$).  What is the conditional probability that $\ds \sum_{i = 1}^L a_{p_i}(E) = r$?  In the case $L = 1$, condition ($1_L$) is empty, and our question becomes identical to the Lang-Trotter conjecture for fixed Frobenius trace. In what follows, we will develop a probabilistic model in the same style as theirs.

Fixing a level $n$, the number $f_n(r,p) \geq 0$ will estimate the probability of the event that $\ds \sum_{i = 1}^L a_{p_i}(E) = r$, given that $(p = p_1, p_2, \dots, p_L)$ is an aliquot sequence of length $L$ for $E$.  We will model the situation by assuming that the vector
\begin{equation} \label{frobvector}
(\frob_{\mbq(E[n])}(p_1), \frob_{\mbq(E[n])}(p_2), \dots \frob_{\mbq(E[n])}(p_L)) \in \gal(\mbq(E[n])/\mbq)_{\text{ali-sequence}}^L
\end{equation}
is randomly distributed according to counting measure, and we will assume that the various $\ds \frac{a_{p_i}(E)}{2\sqrt{p_i}}$ are independent at infinity, i.e. that $\phi_L$ is the distribution function for $\ds \sum_{i = 1}^L \frac{a_{p_i}(E)}{2\sqrt{p_i}}$.  We will also assume independence of the random variables $\ds \sum_{i = 1}^L \frac{a_{p_i}(E)}{2\sqrt{p_i}}$ and \eqref{frobvector}.  
Finally, in order to simplify our model, we will also regard all of the various primes $p_i$ as having the same 
size, namely $p$.  These considerations lead us to the following assumptions about the probabilities $f_n(r,p)$:
\begin{equation*} 
\begin{split}
f_n(r,p) &= 0 \; \text{ if } \; |r| > 2L \sqrt{p} \\
f_n(r,p) &= \phi_L\left( \frac{r}{2\sqrt{p}} \right) \cdot 
\frac{n | \gal(\mbq(E[n])/\mbq)_{\text{ali-sequence}}^{L, \sum \tr = r} |}{| \gal(\mbq(E[n])/\mbq)_{\text{ali-sequence}}^L |} 
\cdot c_p \; \text{ if } \; |r| \leq 2L \sqrt{p},
\end{split}
\end{equation*}
where $c_p$ is some constant chosen so that $\ds \sum_{r \in \mbz} f_n(r,p) = 1$.  Then, similarly to \cite[pp. 31--32]{langtrotter}, one concludes that $\ds c_p \sim \frac{1}{2\sqrt{p}}$, as $p \rightarrow \infty$.  This leads to \eqref{probfourprime}, completing the derivation of Conjecture \ref{mainconjecture}.

\section{Examples} \label{numerics}

We will now give some numerical evidence for Conjecture \ref{mainconjecture}. 

\subsection{Elliptic curves with $C_{E,L} > 0$}

Table 2 and Table 3 display some data for four elliptic curves.  In each table, the column labelled ``Predicted'' 
lists the approximate values of $\ds C_{E,L} \int_2^{10^{13}} \frac{dt}{2\sqrt{t}(\log t)^L}$, ``Actual'' lists the values of 
$\pi_{E,L}(10^{13})$, and ``$\%$ error'' lists as a percentage the approximate values of
\[
\frac{C_{E,L} \int_2^{10^{13}} \frac{dt}{2\sqrt{t}(\log t)^L} - 
\pi_{E,L}(10^{13})}{C_{E,L} \int_2^{10^{13}} \frac{dt}{2\sqrt{t}(\log t)^L}}.
\]
The first and third curves were already considered in \cite{silvermanstange}, and are included here largely 
to show the contrast with the second curve.  For each of these curves, a detailed list of the aliquot cycles with 
$p_1 \leq 10^{13}$ may be found in the appendix.

\begin{table}
\begin{equation*} 
\begin{array}{|c||c|c|c|} \hline E & \text{Predicted} & \text{Actual} & \text{\% error} \\ 
\hline\hline y^2 +y = x^3 - x & 318.98 & 332 & -4.08 \% \\ 
\hline y^2 = x^3 + 6x - 2 & 546.78 & 563 & -2.97 \% \\
\hline y^2 + y = x^3 + x^2 & 318.97 & 328 & -2.83 \% \\
\hline y^2 + xy + y = x^3 - x^2 & 318.95 & 331 & -3.78 \% \\
\hline
\end{array}
\end{equation*}
\tablename{ 2:  Data on $\pi_{E,2}(10^{13})$ for various $E$}
\end{table}

\begin{table}
\begin{equation*} 
\begin{array}{|c||c|c|c|} \hline E & \text{Predicted} & \text{Actual} & \text{\% error} \\ 
\hline\hline y^2 +y = x^3 - x & 3.03 & 3 & 1.05 \% \\ 
\hline y^2 = x^3 + 6x - 2 & 12.59 & 12 & 4.66 \% \\
\hline y^2 + y = x^3 + x^2 & 3.04 & 2 & -34.10 \% \\
\hline y^2 + xy + y = x^3 - x^2 & 3.02 & 4 & -32.48 \% \\
\hline
\end{array}
\end{equation*}
\tablename{ 3:  Data on $\pi_{E,3}(10^{13})$ for various $E$}
\end{table}

The four elliptic curves $E$ under consideration satisfy the property that, for each $n \geq 1$, 
\begin{equation} \label{serrecurve}
[ GL_2(\mbz/n\mbz) : \gal(\mbq(E[n])/\mbq) ] \leq 2
\end{equation}
(See \cite[pp. 309--311]{serre} and \cite[p. 51]{langtrotter}).  As shown in \cite[pp. 310--311]{serre}, 
this is the smallest index that one can have for general $n$ when the elliptic curve $E$ is defined over $\mbq$. 
We call any elliptic curve $E$ satisfying \eqref{serrecurve} a \textbf{Serre curve}.  Serre curves are thus
elliptic curves for which $\gal(\mbq(E[n])/\mbq)$ is ``as large as possible for all $n$,'' and it has been shown
that, when ordered by height, almost all elliptic curves are Serre curves (see \cite{jones}, \cite{radhakrishnan}).
One can show that for any Serre curve $E$, one has $C_{E,L} > 0$.  In fact, if we define the constant 
$C_L$ by
\[
C_L := \frac{\phi_L(0)}{L} \cdot \lim_{k \rightarrow \infty} \frac{n_k^L | GL_2(\mbz/n_k\mbz)_{\text{ali-cycle}}^L |}{| GL_2(\mbz/n_k\mbz)^L |} = \frac{\phi_L(0)}{L} \cdot \prod_{\ell \text{ prime}} \frac{\ell^L | GL_2(\mbf_\ell)_{\text{ali-cycle}}^L |}{| GL_2(\mbf_\ell)^L |},
\]
then for any Serre curve $E$ one has that
\[
C_{E,L} = C_L \cdot f_L(\gD_{sf}(E)),
\]
where $\gD_{sf}(E)$ denotes the square-free part of the discriminant of any Weierstrass model of $E$ and $f_L$ is 
a positive function which approaches $1$ as $|\gD_{sf}(E)|$ approaches infinity.  For $L = 2$ one has
\[
\begin{split}
C_2 &= \frac{\phi_2(0)}{2} \cdot \prod_{\ell \text{ prime}} 
\frac{\ell^2 | GL_2(\mbf_\ell)_{\text{ali-cycle}}^2 |}{| GL_2(\mbf_\ell)^2 |} \\
&= \frac{8}{3\pi^2} \cdot \prod_{\ell \text{ prime}} 
\frac{\ell^2(\ell^4 - 2\ell^3 - 2\ell^2 + 3\ell + 3)}{[(\ell^2 - 1)(\ell - 1)]^2} \\
&\approx 0.077088124,
\end{split}
\]
whereas for $L = 3$ one has
\[
\begin{split}
C_3 &= \frac{\phi_3(0)}{3} \cdot \prod_{\ell \text{ prime}} 
\frac{\ell^3 | GL_2(\mbf_\ell)_{\text{ali-cycle}}^3 |}{| GL_2(\mbf_\ell)^3 |} \\
&= \frac{\phi_3(0)}{3} \cdot \prod_{\ell \text{ prime}} 
\frac{\ell^3[\ell^6 - 3\ell^5 - 3\ell^4 + 14\ell^3 + 
(3 + \chi(\ell))\ell^2 - (19 + 3\chi(\ell))\ell - 10 - 3\chi(\ell)]}{[(\ell^2 - 1)(\ell - 1)]^3} \\
&\approx 0.019759298,
\end{split}
\]
where $\ds \chi(\ell) = \left( \frac{-3}{\ell} \right)$ denotes the character of conductor $3$.
Table 4 gives the values of $C_{E,2}$, $C_{E,3}$ and $\gD_{sf}(E)$ for each of the four curves under consideration. 
\begin{table}
\begin{equation*}
\begin{array}{|c||c|c|c|} \hline E & C_{E,2} & C_{E,3} & \gD_{sf}(E) \\ 
\hline\hline y^2 +y = x^3 - x & \approx 0.077093 & \approx 0.019841 & 37 \\ 
\hline y^2 = x^3 + 6x - 2 & \approx 0.132151 & \approx 0.082365 & -3 \\
\hline y^2 + y = x^3 + x^2 & \approx 0.077091 & \approx 0.019861 & -43 \\
\hline y^2 + xy + y = x^3 - x^2 & \approx 0.077088 & \approx 0.019759 & -53 \\
\hline
\end{array}
\end{equation*}
\tablename{ 4:  Values of $C_{E,2}$, $C_{E,3}$ and $\gD_{sf}(E)$}
\end{table}
The reason the second curve has a larger value of $C_{E,L}$ is that $|\gD_{sf}(E)|$ is smaller 
for this curve than for the others.

\bigskip

\subsection{An elliptic curve with $C_{E,L} = 0$}

We will now discuss briefly the elliptic curve 
\begin{equation} \label{counterexampleE}
E \; : \; y^2 = x^3 -3x + 4 
\end{equation}
which was mentioned in the introduction, for which $\pi_{E,L}(x) \equiv 0$ and whose associated graph $\mc{G}_E$ 
contains no closed walks at all.  We will presently describe the Galois group $\gal(\mbq(E[4])/\mbq)$, which is an 
index $4$ subgroup of $GL_2(\mbz/4\mbz)$.  First, define the subgroup $H(4) \subseteq GL_2(\mbz/4\mbz)$ by 
\[
H(4) := \left\{ \begin{pmatrix} 1 & 0 \\ 0 & 1 \end{pmatrix}, \begin{pmatrix} 0 & 1 \\ -1 & -1 \end{pmatrix}, 
\begin{pmatrix} -1 & -1 \\ 1 & 0 \end{pmatrix}, \begin{pmatrix} -1 & -1 \\ 0 & 1 \end{pmatrix}, 
\begin{pmatrix} 1 & 0 \\ -1 & -1 \end{pmatrix}, \begin{pmatrix} 0 & 1 \\ 1 & 0 \end{pmatrix} \right\}.
\]
We then have
\begin{equation} \label{galoisatlevel4}
\gal(\mbq(E[4])/\mbq) = H(4) \cdot \left( I + 2 \left\{ \begin{pmatrix} 0 & 0 \\ 0 & 0 \end{pmatrix}, 
\begin{pmatrix} 1 & 1 \\ 0 & 1 \end{pmatrix}, \begin{pmatrix} 1 & 0 \\ 1 & 1 \end{pmatrix}, 
\begin{pmatrix} 0 & 1 \\ 1 & 0 \end{pmatrix} \right\} \right).
\end{equation}
(To see that the right-hand expression defines a subgroup of $GL_2(\mbz/4\mbz)$, note that 
\[
\left\{ \begin{pmatrix} 0 & 0 \\ 0 & 0 \end{pmatrix}, \begin{pmatrix} 1 & 1 \\ 0 & 1 \end{pmatrix}, 
\begin{pmatrix} 1 & 0 \\ 1 & 1 \end{pmatrix}, \begin{pmatrix} 0 & 1 \\ 1 & 0 \end{pmatrix} \right\} \subseteq
M_{2\times 2}(\mbz/2\mbz)
\]
is closed under addition and under $GL_2(\mbz/2\mbz)$-conjugation.)

Since $\gal(\mbq(E[4])/\mbq)$ is a proper subgroup of $GL_2(\mbz/4\mbz)$ (even though 
$\gal(\mbq(E[2])/\mbq) = GL_2(\mbz/2\mbz)$) one has $4 \mid m_E$, and in this case the restriction map 
$\gal(\mbq(E[m_E])/\mbq) \twoheadrightarrow \gal(\mbq(E[4])/\mbq)$ induces a graph morphism
\begin{equation} \label{graphmorphism}
\mc{G}_E = \mc{G}_E(m_E) \twoheadrightarrow \mc{G}_E(4),
\end{equation}
which is surjective in the sense that it carries the vertex set $\mc{V}(m_E)$ onto $\mc{V}(4)$ and likewise 
carries $\mc{E}(m_E)$ onto $\mc{E}(4)$.

On the other hand, using \eqref{galoisatlevel4}, one finds that the directed graph $\mc{G}_E(4)$ is as follows. \\

\begin{equation} \label{graphGE}
\begin{pgfpicture}{0cm}{0cm}{10cm}{0.5cm}
\pgfnodecircle{Node1}[fill]{\pgfxy(1,.5)}{0.1cm}
\pgfnodecircle{Node2}[fill]{\pgfxy(4,.5)}{0.1cm}
\pgfnodecircle{Node3}[fill]{\pgfxy(6, .5)}{0.1cm}
\pgfnodecircle{Node4}[fill]{\pgfxy(8, .5)}{0.1cm}
\pgfnodeconnline{Node3}{Node2}
\pgfnodeconnline{Node3}{Node4}
\pgfsetendarrow{\pgfarrowtriangle{3pt}}
\pgfxyline(6,.5)(5,.5)
\pgfsetendarrow{\pgfarrowtriangle{3pt}}
\pgfxyline(6,.5)(7,.5)
\pgfputat{\pgfxy(.5, .1)}{\pgfbox[left,center]{$(2, 1)$}}
\pgfputat{\pgfxy(3.5,.1)}{\pgfbox[left,center]{$(2, -1)$}}
\pgfputat{\pgfxy(5.5,.1)}{\pgfbox[left,center]{$(-1, 1)$}}
\pgfputat{\pgfxy(7.5,.1)}{\pgfbox[left,center]{$(0, -1)$}}
\end{pgfpicture}
\end{equation}

\medskip

\subsubsection{Infinitely many primes $p$ for which $|E(\mbf_p)|$ is prime}

The non-CM case of a conjecture of Koblitz (see \cite{koblitz} and also \cite{zywina}) expresses (in our 
terminology) that for any non-CM elliptic curve $E$, the existence of a single directed edge in $\mc{G}_E$ implies 
the existence of infinitely many primes $p$ for which $| E(\mbf_p) |$ is prime.  Taking $E$ to be the elliptic 
curve given by \eqref{counterexampleE}, we see by the surjectivity of \eqref{graphmorphism} together with 
\eqref{graphGE} that $\mc{G}_E$ contains at least one directed edge.  Thus, assuming Koblitz's conjecture, there 
are infinitely many primes $p$ for which $| E(\mbf_p) |$ is prime.

\subsubsection{Finitely many aliquot cycles for $E$}

Continuing with the example \eqref{counterexampleE}, by the surjectivity of \eqref{graphmorphism} together with 
\eqref{graphGE}, we see that $\mc{G}_E$ contains no closed walks at all.  By Proposition \ref{equivproposition}, 
there are only finitely many aliquot cycles $(p_1,p_2, \dots, p_L)$ for $E$.  This particular example may be 
explained as follows. 
Whenever $p_2 = | E(\mbf_{p_1}) |$ for some prime $p_1$, we see from \eqref{graphGE} that 
$(\tr (\text{Frob}_{\mbq(E[4])}(p_1)), \det (\text{Frob}_{\mbq(E[4])}(p_1)) )= (-1,1)$ (otherwise, 
$|E(\mbf_{p_1})|$ would be even).  But then 
$(\tr (\text{Frob}_{\mbq(E[4])}(p_2)), \det (\text{Frob}_{\mbq(E[4])}(p_2)) ) \in \{ (0,-1),(2,-1) \}$, in which case 
$| E(\mbf_{p_2}) |$ must be even.  One deduces that $E$ has no aliquot cycles of length $L \geq 2$, and indeed no 
aliquot \emph{sequences} of length $L \geq 3$.

\begin{remark}
There is a modular curve $X$ of level $4$ and genus $0$ with $| X(\mbq) | = \infty$, whose non-cuspidal 
$\mbq$-rational points correspond to elliptic curves $E'$ for which $-\gD_{E'}$ is a perfect square. 
For almost all such elliptic curves $E'$, one may find an appropriate twist $E$ of $E'$ for which 
\eqref{galoisatlevel4} holds, and thus for which $\ds \lim_{x \rightarrow \infty} \pi_{E,L}(x) < \infty$.  The 
elliptic curve \eqref{counterexampleE} is one such example.
\end{remark}

\section{Acknowledgments}

The author gratefully acknowledges A.C. Cojocaru, who first brought this question to my attention, and also J. 
Silverman for a stimulating discussion.  Also many thanks to A. Sutherland, who provided help with the computations (a description of the software used therein may be found in \cite{kedyalasutherland}).  Finally, thanks to the anonymous referee for a careful reading of the manuscript and several helpful comments.

\newpage

\section{Appendix:  explicit lists of aliquot cycles}

The following tables list explicitly the normalized aliquot cycles with $p_1 \leq 10^{13}$ for each elliptic curve in 
Tables 2 and 3 of Section \ref{numerics}.  As mentioned before, the list for the first and third elliptic curves already appear in the 
literature (see \cite{silvermanstange}).

\begin{equation*} 

\end{equation*}

\end{document}